\documentclass[12pt]{amsart}

\usepackage{amsmath,amsfonts,amssymb,amsthm}
\usepackage{graphicx}
\usepackage{color,colordvi}
\usepackage{hyperref}

\def\softd{{\leavevmode\setbox1=\hbox{d}%
\hbox to 1.05\wd1{d\kern-0.4ex{\char039}\hss}}}
\def\softt{{\leavevmode\setbox1=\hbox{t}%
\hbox to \wd1{t\kern-0.6ex{\char039}\hss}}}

\newcommand{\C}{\mathbb{C}}
\newcommand{\N}{\mathbb{N}}
\newcommand{\R}{\mathbb{R}}
\newcommand{\T}{\mathbb{T}}
\newcommand{\Z}{\mathbb{Z}}

\newcommand{\GG}{\mathcal{G}}
\newcommand{\HH}{\mathcal{H}}
\newcommand{\LL}{\mathcal{L}}
\newcommand{\NN}{\mathcal{N}}
\newcommand{\OO}{\mathcal{O}}
\newcommand{\RR}{\mathcal{R}}

\newcommand{\cY}{\mathcal{Y}}
\newcommand{\cZ}{\mathcal{Z}}
\newcommand{\ee}{\mathrm{e}}
\newcommand{\pd}{\partial}
\newcommand{\D}{\mathrm{d}}
\newcommand{\la}{\langle}
\newcommand{\ra}{\rangle}

\newcommand{\csch}{\operatorname{csch}}

\newcommand{\dD}{\mathcal{D}}
\newcommand{\jump}[2]{[#1]_{#2}}

\newcommand{\bbU}{\mathbb{U}}

\newcommand{\sym}{\mathrm{sym}}

\newtheorem{theorem}{Theorem}[section]

\newtheorem{proposition}{Proposition}[section]
\newtheorem{corollary}{Corollary}[section]

\newtheorem{remark}{Remark}[section]

\newtheorem{example}{Example}[section]

\title[$\delta'$ leaky star optimization]
{An optimization problem for $\delta'$ leaky stars}

\author[Pavel Exner]{Pavel Exner\,$^{1,2}$}
\address{$^1$ Doppler Institute for Mathematical Physics and Applied Mathematics, Czech Technical University,  B\v rehov\'a 7, 11519 Prague, Czechia}
\address{$^2$ Department of Theoretical Physics, Nuclear Physics Institute, Academy of Sciences, Hlavn\'{\i} 130, 25068 \v{R}e\v{z} near Prague, Czechia}
\email{exner@ujf.cas.cz}

\begin{document}

\begin{abstract}
We consider singular Schr\"odinger operators with an attractive $\delta'$ interaction of strength $\beta$ supported by a star graph and prove that the ground-state eigenvalue is uniquely maximized by stars of the maximum symmetry. Two different proofs are presented, one for an even number of edges using a comparison to the $\delta$ interaction star, the other general based on Birman-Schwinger technique.
\end{abstract}


%

%

%
%
%
\maketitle

\section{Introduction} \label{s: intro}

Singular interactions are useful as a source of solvable models of quantum mechanical systems. This concerns both point interactions discussed in detail in \cite{AGHH} as well as those for which the interaction support is a curve or a graph of lower dimensionality, see e.g. \cite{EI01, EN03, BLL13}. Such singular Schr\"odinger operators offer, in particular, various spectral-geometry-type problems. Among them, there is the question about the geometry of a star graph which maximizes the ground state eigenvalue; in case of $\delta$ interactions it was shown that this happens for configurations of maximum symmetry \cite{EL18}. In contrast to $\delta$, the more singular interaction $\delta'$ is less explored. In \cite{PV22}, it was proved that the discrete spectrum of a $\delta'$ star is finite and the case of the simplest star, a broken line, was examined in detail; it was shown that the eigenvalues are monotonous with respect to the angle and their asymptotics was found in the situation when the angle between the two halflines tends to zero.

The aim of the present paper is to provide a $\delta'$ counterpart to the result of \cite{EL18}, namely to show that the maximizing configuration is again an equiangular star. We will do it in two different ways. For even number $N$ of the edges we use the fact that the star then divides the plane into a two-color set, in which case the inequality between the $\delta$ and $\delta'$ star operators proved in \cite{BEL14} relates the maximizing configuration of the latter with strength $\beta$ to that of the former with strength $4\beta^{-1}$. The second approach imposes no restriction on the parity of $N$; it uses the Birman-Schwinger technique. The corresponding claims and their proofs are presented in Sections~3 and 4, respectively. As a preliminary, we discuss in the next section the one-dimensional counterpart of our problem, optimization for a finite number of $\delta'$ interactions on a loop. For a strong enough $\delta'$ interaction, we also indicate here the configuration minimizing the highest eigenvalue.

\section{Preliminaries: $\delta'$ interactions on a loop}
\label{s:1Dloop}
\setcounter{equation}{0}

To begin with, let us consider a finite number $N\ge 2$ of $\delta'$ interactions of identical strength placed on a loop regarded as $\T=\R/\ell$ with some $\ell>0$; in view of the scaling properties we may assume without loss of generality that the loop length is $\ell=2\pi$. In other words, we consider the self-adjoint operator $H_{\beta,Y}$ on $\HH = L^2(I)$ with $I=(0,2\pi)$ associated with the quadratic form
\begin{equation} \label{loopform}
     q_{\beta,Y}[f] = \int_0^{2\pi} |f'(x)|^2\D x - \beta^{-1} \sum_{j=1}^N |[f](y_j)|^2, \quad [f](y_j):= f(y_j+)-f(y_j-),
\end{equation}
for given $\beta\ne 0$ and $Y=Y_N:=\{y_j\}_{j=1}^N$ such that $0<y_1<\cdots<y_N<2\pi$; the form domain consists of functions from $H^1(I\setminus Y)$ obeying periodic boundary conditions. The operator $H_{\beta,Y}$ acts as a negative Laplacian on functions from $H^2(I\setminus Y)$ satisfying the conditions
   \begin{align}
     & f'(y_j+) = f'(y_j-) =:f'(y_j)\,, \quad f(y_j+)-f(y_j-)= -\beta f'(y_j)\,, \quad j=1,\dots,N\,, \label{1Ddelta'} \\
     & f(2\pi-) = f(0+)\,, \quad f'(2\pi-) = f'(0+)\,, \nonumber 
   \end{align}
cf.~\cite[Chap.~II.3]{AGHH}. In contrast to this standard source we use the opposite sign convention for the coupling constant $\beta$, more convenient in case of attractive interactions which are our main interest. It is easy to check that its spectrum is purely discrete, $\sigma(H_{\beta,Y}) = \{\lambda_n(\beta,Y)\}_{n=1}^\infty$, conventionally arranged in the ascending order and accumulating at infinity. Let us mention some other spectral properties.
\begin{proposition} \label{1disc}
In each interval $(-\infty,0)$ and $\big(n^2,(n+1)^2\big),\; n\in\N_0$, there is at most $N$ eigenvalues of $H_{\beta,Y}$. We have $\lambda_1(\beta,Y)=0$ for any $\beta<0$, while for $\beta>0$ and $N\ge 2$ the negative spectrum is nonempty of total multiplicity at least $N-1$.
\end{proposition}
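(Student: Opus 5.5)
The plan is to compare $H_{\beta,Y}$ with the free periodic Laplacian $H_0$ on the loop and to use min-max together with the fact that the two operators differ only by a rank-$N$ perturbation.

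\emph{The comparison operator.} The operator $H_0$ is associated with the form $q_0[f]=\int_0^{2\pi}|f'|^2\,\D x$ on $H^1(\T)$. Its spectrum is $\{n^2\}_{n\in\N_0}$, with $0$ simple and every $n^2$, $n\ge1$, of multiplicity two. The form domain $H^1(\T)$ is exactly the subspace of $D(q_{\beta,Y})$ on which all the jumps $[f](y_j)$ vanish. It is therefore a subspace of codimension $N$, and $q_{\beta,Y}$ coincides with $q_0$ on it. Equivalently, $H_0$ and $H_{\beta,Y}$ are both self-adjoint extensions of the symmetric operator $-\frac{\D^2}{\D x^2}$ restricted to functions vanishing together with their derivatives at the points of $Y$. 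That restriction has deficiency indices $(N,N)$ relative to $H_0$, at least on the part of the domain that matters. Hence, by the Krein resolvent formula, $(H_{\beta,Y}-z)^{-1}-(H_0-z)^{-1}$ has rank at most $N$.

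\emph{First claim.} A standard consequence of a rank-$N$ resolvent difference is that, in any open interval free of the spectrum of one operator, the other has at most $N$ eigenvalues counted with multiplicity. Suppose $N+1$ eigenvectors lay in the gap. Their span would contain a nonzero vector annihilated by the rank-$N$ difference, and this contradicts the gap condition after applying $(H-c)^{-1}$ with $c$ the interval midpoint, by a spectral-theorem estimate. This argument applies directly to the gaps $(n^2,(n+1)^2)$.

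For $(-\infty,0)$ there is a more elementary min-max argument. Since $D(q_{\beta,Y})\supset H^1(\T)$ with codimension $N$ and the forms agree there, we get
\[
\lambda_{n+N}(\beta,Y)\ge\lambda_n(H_0).
\]
In particular $\lambda_{N+1}(\beta,Y)\ge\lambda_1(H_0)=0$, so at most $N$ eigenvalues are negative.

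\emph{Second claim.} If $\beta<0$, then $-\beta^{-1}>0$, so $q_{\beta,Y}\ge0$. The constant function belongs to the form domain with zero jumps and gives $q_{\beta,Y}=0$. Hence $\lambda_1(\beta,Y)=0$.

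If $\beta>0$, I would use the $N$-dimensional space of functions that are constant, equal to $c_j$, on each arc $(y_j,y_{j+1})$ (indices taken cyclically). Such functions lie in the form domain, and for them
\[
q_{\beta,Y}[f]=-\beta^{-1}\sum_j|c_j-c_{j-1}|^2 .
\]
This is strictly negative unless all $c_j$ coincide. On the $(N-1)$-dimensional subspace orthogonal to the constants the form is therefore negative definite, because it is a negative quadratic form on a finite-dimensional space with no nonzero null vectors. Min-max then gives $\lambda_{N-1}(\beta,Y)<0$.

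\emph{Main obstacle.} The only delicate step is the gap-counting for $(n^2,(n+1)^2)$. Min-max alone gives one-sided interlacing, $\lambda_n(\beta,Y)\le\lambda_n(H_0)$ fails to follow because the domains are not nested in the right direction. So I would rely on the Krein-formula rank-$N$ argument, checking carefully that the resolvent difference indeed has rank $N$ via the explicit Krein formula with the $N$ jump functionals.
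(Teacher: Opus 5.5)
Your route is essentially the paper's. The first claim rests on $H_{\beta,Y}$ and the free loop Laplacian $H_0$ having a common symmetric restriction with deficiency indices $(N,N)$; the paper simply cites Weidmann's corollary to Thm.~8.19, and your gap argument reproves its content. The second claim uses the same trial functions: the constant for $\beta<0$, and functions constant on the arcs for $\beta>0$. That second part is fine as written, but the first part has two things to fix and one unnecessary detour.

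\emph{The restriction you name is the wrong one.} The operator $-\frac{\D^2}{\D x^2}$ on $\{f\in H^2(\T):\ f(y_j)=f'(y_j)=0\}$ has deficiency indices $(2N,2N)$. It therefore only yields rank $\le 2N$ and at most $2N$ eigenvalues per gap, and your hedge ``at least on the part of the domain that matters'' does not repair this. Take instead
$D(H_0)\cap D(H_{\beta,Y})=\{f\in H^2(\T):\ f'(y_j)=0,\ j=1,\dots,N\}$.
For $f\in H^2(\T)$ the jumps vanish, so $[f](y_j)=-\beta f'(y_j)$ reduces to $f'(y_j)=0$. This subspace has codimension $N$ in both operator domains. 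Alternatively, rank $\le N$ follows from the forms. For $c$ large, the difference $(H_{\beta,Y}+c)^{-1}g-(H_0+c)^{-1}g$ is orthogonal, with respect to $q_{\beta,Y}+c$, to $H^1(\T)$, and $H^1(\T)$ has codimension $N$ in the form domain.

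\emph{The midpoint in your gap-counting sketch.} The midpoint $c$ may itself be an eigenvalue of $H_{\beta,Y}$, so the resolvent need not exist there. One fix is to move $c$ slightly: the finitely many strict inequalities $|\lambda_i-c|<\mathrm{dist}(c,\sigma(H_0))$ survive, and $\sigma(H_{\beta,Y})$ is discrete. The other, as Weidmann does, is to intersect the span of the $N+1$ eigenvectors with $D(H_0)\cap D(H_{\beta,Y})$ and use $\|(H_{\beta,Y}-c)u\|=\|(H_0-c)u\|$ without inverting anything.

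\emph{Your ``main obstacle'' is illusory.} The bound $\lambda_n(\beta,Y)\le\lambda_n(H_0)$ \emph{does} follow from min-max. The reason is that $H^1(\T)\subset D(q_{\beta,Y})$ with the forms agreeing there, which is the same inclusion that gives your bound $\lambda_{n+N}(\beta,Y)\ge\lambda_n(H_0)$. With both inequalities, an eigenvalue $\lambda_k(\beta,Y)\in(n^2,(n+1)^2)$ forces $\lambda_k(H_0)>n^2$ and $\lambda_{k-N}(H_0)<(n+1)^2$. Since $H_0$ has exactly $2n+1$ eigenvalues below $(n+1)^2$, all lying in $[0,n^2]$, this means $2n+2\le k\le 2n+1+N$. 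So the whole first claim follows from min-max alone, a more elementary route than either the paper's citation or your Krein-formula detour.
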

\begin{proof}
The first claim follows from general principles \cite[Corr.~to Thm.~8.19]{We80} because $H_{\beta,Y}$ and the Laplacian of the loop have a common symmetric restriction of deficiency indices $(N,N)$. By \eqref{loopform}, the operator $H_{\beta,Y}$ is positive in the repulsive case, $\beta<0$, and zero is an eigenvalue for any $\beta$ referring to the constant eigenfunction $f(x)=(2\pi)^{-1/2}$. On the other hand, using step functions constant in each interval $(y_j,y_{j+1})$ one can check that $\inf_{f\in H^1(I\setminus Y)} q_{\beta,Y}[f]<0$ for $\beta>0$. Moreover, among such trial functions one find $N-1$ linearly independent mutually and with the trivial one, so the last claim follows by minimax principle. This argument naturally fails for $N=1$, and indeed, in that case the negative spectrum may be empty; it is straightforward to check that this happens if and only if $\beta\ge 2\pi$.
\end{proof}

\begin{remark} \label{rem:nopreserve}
{\rm While the $\delta$ interaction counterpart $H^\delta_{\alpha,Y}$ to our operator, associated with the form \eqref{deltaform} below shares the property of the `usual' Schr\"odinger operators having a simple ground state with positive eigenfunction, the situation with  $H_{\beta,Y}$ is different. The positivity is violated already for $N=1$ where the (unnormalized) ground state eigenfunction for $\beta\in(0,2\pi)$ equals $f(x)= \sinh\kappa(x-\pi)$, where $\kappa$ is the solution to $\frac{2}{\kappa}\tanh\kappa\pi = \beta$. More generally, the existence of simple ground state with positive eigenfunction requires $\exp(-tH_{\beta,Y})$ to be positivity preserving to any $t>0$ which, by Beurling-Deny criterion \cite[Thm.~XIII.50]{RS78}, is equivalent the inequality $q[f]\ge q[|f|]$ for all $f\in H^1(I\setminus Y)$. This is not the case, though, since one can find easily an $f$ for which $[|f|](y_j) < [f](y_j)$, and indeed, the ground state may be degenerated which happens for any odd $N$, cf.~Example~\ref{ex:threepoint} and Remark~\ref{rem:spectrum}.}
\end{remark}

We are interested in the configurations $Y$ which optimize the ground state, more specifically, looking for its maximimizer. Note that the \emph{minimizer} is trivial for $\beta<0$, where $\lambda_1(\beta,Y)=0$ holds for any $Y$, and it does not exist for $\beta>0$.
\begin{proposition} \label{1nomin}
Given $\beta>0$, to any $c<0$ one can find an $Y$ such that $\lambda_1(\beta,Y)<c$.
\end{proposition}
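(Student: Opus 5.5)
The idea is to let two of the interaction points collide. A $\delta'$ interaction of strength $\beta$ between two close points $y_1<y_2$ can then be exploited by a trial function that jumps up at $y_1$ and back down at $y_2$. The resulting value of the form is very negative when $d=y_2-y_1$ is small. By the minimax principle, $\lambda_1(\beta,Y)\le q_{\beta,Y}[f]/\|f\|^2$ for any $f$ in the form domain, so it suffices to produce, for each $c<0$, a configuration $Y$ and a trial function $f$ with Rayleigh quotient below $c$.

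Concretely, take $y_1=\pi-d/2$ and $y_2=\pi+d/2$, and place the remaining $N-2$ points (if any) anywhere outside $[y_1,y_2]$. The trial function is the step function $f=\chi_{(y_1,y_2)}$, which belongs to $H^1(I\setminus Y)$, is periodic, and has $f'=0$ away from $Y$. Its jumps are $[f](y_1)=1$ and $[f](y_2)=-1$, and $[f](y_j)=0$ at all other points. Hence
\[
q_{\beta,Y}[f] = -2\beta^{-1}, \qquad \|f\|^2 = d,
\]
and the Rayleigh quotient equals $-2/(\beta d)$.

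It remains to choose $d<\min\{2\pi,\,2/(\beta|c|)\}$, so that $-2/(\beta d)<c$. The condition $d<2\pi$ guarantees that the two points fit on the loop with the others. This gives $\lambda_1(\beta,Y)<c$.

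There is no real obstacle in this argument. The only point to check is that step functions are admissible in the form domain, which holds because the domain is $H^1(I\setminus Y)$ and no continuity is required at the points of $Y$. The same argument was already used in the proof of Proposition~\ref{1disc}.
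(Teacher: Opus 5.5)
Your proof is correct and takes essentially the paper's route: bring two interaction points to distance $d$ and test the form with a function supported on the short arc between them. The only difference is the trial profile. The paper uses $\cosh x$ on $(-\eta,\eta)$ and obtains $-\beta^{-1}\eta^{-1}\big(1+\OO(\eta^2)\big)$ by Taylor expansion, whereas your indicator function gives the exact quotient $-2/(\beta d)$, which is the same leading term for $d=2\eta$. It also handles the other $N-2$ points directly, since the jumps there vanish, instead of first reducing to $N=2$.
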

\begin{proof}
It is convenient to extend $Y$ periodically; then one can obviously replace $(0,2\pi)$ by any interval on length $2\pi$ the endpoints of which have an empty intersection with $Y$. It is also clear from \eqref{loopform} that we may restrict ourselves to the case $N=2$. We then take $I=(-\pi,\pi)$ and place the interaction to $y_j=(-1)^j\eta,\: j=1,2$. Choosing $f(x)=\cosh x$ for $|x|<\eta$ and zero otherwise, we have $\|f\|^2= \eta+\frac12\sinh2\eta$ a and $\|f'\|^2= -\eta+\frac12\sinh2\eta$, hence the Rayleigh quotient for this function is
$$
\frac{q_{\beta,Y}[f]}{\|f\|^2} = \frac{\|f'\|^2 - 2\beta^{-1}(|f[-\eta]|^2 + |f[\eta]|^2)}{\|f\|^2} = \frac{-\eta+\frac12\sinh2\eta - 2\beta^{-1}\cosh2\eta}{\eta+\frac12\sinh2\eta}.
$$
Taking now Taylor expansion of the right-hand side expression, we get $-\frac{\beta^{-1}}{\eta}\big(1+\OO(\eta^2)\big)$ which can be obviously made arbitrarily large negative by choosing $\eta$ small enough.
\end{proof}

Before proceeding further, let us recall the the $\delta$ interaction counterpart of the current problem, the operator $H^\delta_{\alpha,Y}$ on $L^2(I)$ for which conditions \eqref{1Ddelta'} are replaced~by
$$
f(y_j+) = f(y_j-) =:f(y_j)\,, \quad f'(y_j+)-f'(y_j-)= -\alpha f(y_j)\,, \quad j=1,\dots,N\,;
$$
it is associated with the quadratic form
\begin{equation} \label{deltaform}
     q^\delta_{\alpha,Y}:\; q^\delta_{\alpha,Y}[f] = \|f'\|^2 - \alpha \sum_{j=1}^N |f(y_j)|^2
\end{equation}
defined on $H^1(I)$ with periodic boundary conditions. The resolvent of $H^\delta_{\alpha,Y}$ can be expressed by means of Krein's formula,
   \begin{align*}
     & (H^\delta_{\alpha,Y}-k^2)^{-1} = G_k + \sum_{j,j'=1}^N [\Gamma^\delta_{\alpha,Y}(k)]_{jj'}^{-1} \big( \overline{G_k(\cdot - y_{j'})}, \cdot \big) G_k(\cdot - y_j)\,, \\[.3em] 
     & \Gamma^\delta_{\alpha,Y}(k) = - [-\alpha^{-1}\delta_{jj'} + G_k(y_j-y_{j'})]_{j,j'=1}^N\,, 
   \end{align*}
where $G_k=(-\Delta-k^2)^{-1}$ is the free resolvent, an integral operator with the kernel
\begin{equation} \label{1Dkernel}
     G_k(x-y) = \frac{\cos(k(\pi-|x-y|)}{2k\sin\pi k}
\end{equation}
defined for $\mathrm{Im}\,k>0$ as well as for real $k\not\in\Z$; the eigenvalues of $H^\delta_{\alpha,Y}$ are the squared roots of the equation $\det \Gamma^\delta_{\alpha,Y}(k)=0$. For attractive $\delta$ interactions the lowest one is maximized by the configuration of maximum symmetry, for the repulsive ones we need to impose a restriction on the coupling constant.
\begin{proposition} \label{prop:1repuls}
Let $\tilde{Y}:= \{ \frac{\pi}{N}(2j-1)\}_{j=1}^N$, then to any $N\ge 2$ there is an $\alpha_N<0$ such that for any $Y$ with $\#Y=N$ we have $\lambda^\delta_1(\alpha,Y) \le \lambda^\delta_1(\alpha,\tilde{Y})$ provided $\alpha> \alpha_N$, the inequality being saturated if and only if $Y$ is congruent with $\tilde{Y}$.
\end{proposition}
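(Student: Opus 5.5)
The plan is a Birman–Schwinger argument. The attractive case $\alpha>0$ is handled exactly for every coupling by a trial-vector and convexity (Jensen) argument. The repulsive window $\alpha\in(\alpha_N,0)$ is handled by a secular equation together with an expansion in $\alpha^{-1}$, uniform in $Y$. Throughout, write $\mathcal G_E(x)$ for the kernel of $(-\Delta-E)^{-1}$ on $\T$, namely $\mathcal G_E(x)=\frac1{2\pi}\sum_{n\in\Z}\frac{\ee^{inx}}{n^2-E}$, which equals \eqref{1Dkernel} up to the sign convention for $k$. Let $\mathcal G_E(Y)=[\mathcal G_E(y_j-y_{j'})]_{j,j'=1}^N$. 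By Krein's formula, $E\in\sigma(H^\delta_{\alpha,Y})\setminus\sigma(-\Delta)$ if and only if $\alpha^{-1}$ is an eigenvalue of $\mathcal G_E(Y)$. The value $\alpha=0$ is trivial, since then $\lambda^\delta_1=0$ for every $Y$.

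\emph{Attractive case $\alpha>0$.} Here $\lambda_1^\delta=-\kappa^2<0$, and $\mathcal G_{-\kappa^2}(x)=\frac{\cosh\kappa(\pi-|x|)}{2\kappa\sinh\pi\kappa}$ is positive, strictly convex on $[0,2\pi]$, and strictly decreasing in $\kappa$ as a positive definite kernel. Let $\mu(\kappa,Y)$ be the top eigenvalue of $\mathcal G_{-\kappa^2}(Y)$. By Perron–Frobenius, $\kappa(Y)$ is the unique root of $\mu(\kappa,Y)=\alpha^{-1}$, and $\mu(\cdot,Y)$ is strictly decreasing. Using the trial vector $e=(1,\dots,1)$ gives
\[
\mu(\kappa,Y)\ge\frac1N\sum_{m=0}^{N-1}\sum_{j}\mathcal G(y_{j+m}-y_j).
\]
For each $m$ the arcs $y_{j+m}-y_j$ sum to $2\pi m$, so Jensen's inequality gives $\sum_j\mathcal G(y_{j+m}-y_j)\ge N\mathcal G(2\pi m/N)$, with equality only for $\tilde Y$. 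Since $\mathcal G(\tilde Y)$ is circulant, $e$ is its eigenvector and equality holds for $\tilde Y$. Hence $\mu(\kappa,Y)\ge\mu(\kappa,\tilde Y)$, which yields $\kappa(Y)\ge\kappa(\tilde Y)$, strictly unless $Y\cong\tilde Y$.

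\emph{Repulsive case $\alpha<0$.} The constant trial function gives $0<\lambda_1^\delta\le -N\alpha/2\pi$, so for $|\alpha|$ small we only need $E\in(0,E_0]$ with $E_0<1$ fixed. Split
\[
\mathcal G_E=-\frac{1}{2\pi E}\,ee^T+P_E,\qquad P_E(x)=\frac1{2\pi}\sum_{n\neq0}\frac{\ee^{inx}}{n^2-E}.
\]
The matrix $P_E(Y)$ is positive semidefinite, bounded uniformly in $Y$ and $E\le E_0$, and its kernel is strictly convex. Hence $\mathcal G_E(Y)$ has exactly one negative eigenvalue $\nu(E,Y)$, which increases in $E$, and $\lambda^\delta_1(\alpha,Y)$ is the root of $\nu(E,Y)=\alpha^{-1}$. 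Since $\lambda$ is the root of an increasing function, the claim $\lambda_1^\delta(Y)\le\lambda_1^\delta(\tilde Y)$ is equivalent to $\nu(E,Y)\ge\nu(E,\tilde Y)$. By the matrix determinant lemma, $\nu$ is determined by the secular equation
\[
1=\frac1{2\pi E}\,f_Y(\nu),\qquad f_Y(\nu)=\langle e,(P_E(Y)-\nu)^{-1}e\rangle .
\]
The function $f_Y$ is increasing on $(-\infty,0)$, so what we must show is $f_Y(\nu)\le f_{\tilde Y}(\nu)$ for $\nu\sim\alpha^{-1}$ large negative. Expanding in $1/|\nu|$,
\[
f_Y(\nu)=\frac{N}{|\nu|}-\frac{S_Y}{\nu^2}+\OO(|\nu|^{-3}),\qquad S_Y=\langle e,P_E(Y)e\rangle .
\]
Jensen's inequality with the strictly convex kernel $P_E$, exactly as in the attractive case, gives $S_Y\ge S_{\tilde Y}$. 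This is the expected second-order perturbation picture: $\lambda_1^\delta\approx -N\alpha/2\pi-c\,\alpha^2\sum_{j,j'}g(y_j-y_{j'})$ with $g$ the convex reduced Green function.

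\emph{Main obstacle: uniformity in $Y$.} Near $\tilde Y$ the gap $S_Y-S_{\tilde Y}$ degenerates, so the $\OO(|\nu|^{-3})$ remainder must be controlled against it. Strict convexity gives $S_Y-S_{\tilde Y}\ge c\,d(Y,\tilde Y)^2$, with $d$ the distance modulo rotations, and $c>0$ uniform for $E\le E_0$. For the remainder, $\tilde Y$ is a critical point of every smooth rotation- and reflection-invariant function of the configuration, because the dihedral symmetry leaves no invariant nonzero tangent vector transverse to rotations. Hence the higher coefficients $\langle e,P_E^me\rangle_Y-\langle e,P_E^me\rangle_{\tilde Y}$, $m\ge2$, are $\OO(d^2)$ near $\tilde Y$, uniformly in $E\le E_0$, and are bounded globally. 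Consequently
\[
f_Y-f_{\tilde Y}\le -\frac{c\,d^2}{\nu^2}+\frac{C\,d^2}{|\nu|^3}<0
\]
near $\tilde Y$, while away from $\tilde Y$ the bound follows from the uniform gap $S_Y-S_{\tilde Y}\ge\delta>0$. Choosing $|\alpha_N|$ so small that $|\alpha|^{-1}>C/c$ and the far-field bound both hold yields the strict inequality for $Y\not\cong\tilde Y$ and all $\alpha\in(\alpha_N,0)$. Combined with the attractive case, this gives the claim for every $\alpha>\alpha_N$.
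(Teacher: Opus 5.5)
The paper gives no argument of its own here. It defers to Theorem~2.1 of \cite{Ex19} for attractive coupling and to Theorem~5.1 there for weak repulsive coupling; the signs in that citation follow the convention of \cite{Ex19}, opposite to the present one, and you have correctly taken $\alpha>0$ as the attractive regime. Your proof is correct and, because it is self-contained, it differs in kind from the paper's citation.

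Your attractive case is the standard Birman--Schwinger mechanism behind such results. You use the constant trial vector $e$, split $\langle e,\mathcal G(Y)e\rangle$ into $m$-step arc sums, and apply Jensen to the strictly convex kernel. This is the same device the paper uses for the $\delta'$ loop in Example~\ref{ex:threepoint} and for the $\delta$ star in Proposition~\ref{prop:uniqdelta}.

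Your repulsive case is a complete weak-coupling argument:
\begin{itemize}
\item Splitting off the zero mode $-\frac{1}{2\pi E}ee^T$ and applying the matrix determinant lemma reduces the comparison to $f_Y\le f_{\tilde Y}$ at $\nu=\alpha^{-1}$. It suffices to check this at the single energy $E=\lambda^\delta_1(\alpha,\tilde Y)$.
\item Convexity then enters only through the second-order coefficient $S_Y$.
\item You handle the one delicate point of any perturbative proof, the degeneration of $S_Y-S_{\tilde Y}$ near $\tilde Y$. Cyclic invariance makes $\tilde Y$ a critical point of every $\langle e,P_E^m e\rangle$, so the remainder is $\OO(d^2|\alpha|^3)$ against a gain of order $c\,d^2\alpha^2$.
\end{itemize}
Your route gives a checkable proof inside the paper's own framework; the paper's route gains brevity by outsourcing it.

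Two small repairs are needed.
\begin{enumerate}
\item With $E=k^2$ one has $P_E''(x)=\frac{k\cos k(\pi-x)}{2\sin\pi k}$ on $(0,2\pi)$. So $P_E$ is strictly, and uniformly strongly, convex there only for $E<\frac14$, not for all $E<1$. You must take $E_0<\frac14$, which is harmless since shrinking $|\alpha_N|$ puts $E_0$ at your disposal.
\item At $\alpha=0$ one has $\lambda^\delta_1(0,Y)=0$ for every $Y$, so the uniqueness clause fails there. That value must be excluded from the statement rather than called trivial.
\end{enumerate}
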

\begin{proof}
For $\alpha<0$ the claim was proved in Theorem~2.1 of \cite{Ex19}; for small enough positive $\alpha$ see Theorem~5.1 there.
\end{proof}
\begin{remark} \label{rem:small}
{\rm (a) The coupling constant smallness of repulsive $\delta$ interactions is not necessary for symmetric configuration to maximize the ground state eigenvalue, cf.~\cite[Theorem~5.2]{Ex19}. The question whether the claim is true generally remains open. One can prove at least that $\tilde{Y}$ is a \emph{local} maximizer for any $\alpha<0$, see the appendix. \\ (b) The value of $\lambda^\delta_1(\alpha,\tilde{Y})$ is not difficult to find. The eigenfunction is hyperbolic or trigonometric depending on the sign of $\alpha$, $\,\frac{2\pi}{N}$-periodic, with $N$ points where the derivative vanishes, and the corresponding eigenvalue comes from the solution of the transcendent equations $2\kappa\tanh\frac{\pi\kappa}{N}=\alpha$ and $2k\tan\frac{\pi k}{N}=\alpha$ for $\pm\alpha>0$, respectively. This shows that the eigenvalue behaves as $\frac{N}{2\pi}\alpha + \OO(\alpha^2)$ around $\alpha=0$, as $-\frac14\alpha^2 +\OO(\alpha)$ when $\alpha\to\infty$, and it tends to the Dirichlet value $\frac14 N^2$ as $\alpha\to-\infty$.}
\end{remark}

After this detour let us return to the $\delta'$ interactions and show that for some values of the coupling constant we can bound the operator $H_{\beta,Y}$ from above by its $\delta$ interaction family counterpart.
\begin{proposition} \label{prop:1bound}
Let $\chi$ be the chromatic number of the partition of $I$ referring to $Y$, two for an even $N$ and three for an odd one. Suppose that $\beta\le \frac{4}{\alpha}\sin^2\frac{\pi}{\chi}$, than there is a unitary operator $U$ such that $U^{-1} H_{\beta,Y}U \le H^\delta_{\alpha,Y}$.
\end{proposition}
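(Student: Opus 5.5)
This is the one-dimensional version of the argument in \cite{BEL14}. We use a proper colouring of the arcs to build a unitary $U$ that multiplies by a unimodular constant on each arc. Then $q_{\beta,Y}[Uf]$ has the same kinetic term as $q^\delta_{\alpha,Y}[f]$. At each $y_j$ the jump term can be estimated by the point values $|f(y_j)|^2$.

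\emph{Step 1: the colouring and $U$.} Let $I_j=(y_j,y_{j+1})$, indices mod $N$, so the arcs form a cycle. A proper colouring $c:\{1,\dots,N\}\to\{1,\dots,\chi\}$ gives adjacent arcs different colours; for even $N$ alternate two colours, for odd $N$ use two alternately and a third on one arc. Put $\omega_k=\ee^{2\pi i k/\chi}$ and define $(Uf)(x)=\omega_{c(j)}f(x)$ for $x\in I_j$. This $U$ is unitary on $L^2(I)$. It maps $H^1(I)$ with periodic conditions into the form domain of $q_{\beta,Y}$, i.e.\ into $H^1(I\setminus Y)$, and it preserves $\|f'\|^2$.

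\emph{Step 2: the jumps.} For $f\in H^1(I)$ continuous at $y_j$,
$$
[Uf](y_j)=(\omega_{c(j)}-\omega_{c(j-1)})f(y_j), \qquad |[Uf](y_j)|^2=|\omega_{c(j)}-\omega_{c(j-1)}|^2|f(y_j)|^2 .
$$
The two colours are distinct $\chi$-th roots of unity, so $|\omega_{c(j)}-\omega_{c(j-1)}|^2\ge 4\sin^2\frac{\pi}{\chi}$. For $\chi=2$ this is the exact value $4$. For $\chi=3$ every pair of distinct cube roots has squared distance $3=4\sin^2\frac{\pi}{3}$. Hence for $\beta>0$
$$
q_{\beta,Y}[Uf]=\|f'\|^2-\beta^{-1}\sum_j|[Uf](y_j)|^2 \le \|f'\|^2-\beta^{-1}\,4\sin^2\tfrac{\pi}{\chi}\sum_j|f(y_j)|^2 .
$$
The hypothesis $\beta\le\frac4\alpha\sin^2\frac{\pi}{\chi}$ is read with $\alpha,\beta>0$, the attractive case; it is equivalent to $\beta^{-1}4\sin^2\frac{\pi}{\chi}\ge\alpha$. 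The right-hand side is therefore at most $q^\delta_{\alpha,Y}[f]$.

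\emph{Step 3: from forms to operators.} The form $f\mapsto q_{\beta,Y}[Uf]$ is the form of $U^{-1}H_{\beta,Y}U$. Its domain $U^{-1}H^1(I\setminus Y)\cong H^1(I\setminus Y)$ contains the form domain $H^1(I)$ of $H^\delta_{\alpha,Y}$. The form inequality on the smaller domain is exactly the ordering $U^{-1}H_{\beta,Y}U\le H^\delta_{\alpha,Y}$ in the sense of quadratic forms, the sense used in the minimax principle. The only point needing care is that this inequality is between forms with different domains, the smaller domain belonging to the larger operator. That is the standard meaning of the ordering, and it is all that will be used for eigenvalue comparison.
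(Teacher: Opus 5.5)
Your proof is correct and is essentially the paper's argument: a colouring of the arcs by unimodular constants gives the unitary $U$, the kinetic term is invariant, each jump becomes $|\omega_{c(j)}-\omega_{c(j-1)}|^2|f(y_j)|^2$, and the inclusion of form domains $H^1(I)\subset\bigoplus_j H^1(I_j)$ yields the operator ordering. One small remark: for $\chi\in\{2,3\}$ distinct colours are at squared distance exactly $4\sin^2\frac{\pi}{\chi}$, so you actually get the identity $q_{\beta,Y}[Uf]=q^\delta_{\alpha_\chi,Y}[f]$ with $\alpha_\chi=\frac{4}{\beta}\sin^2\frac{\pi}{\chi}$, which is how the paper phrases it; this identity also covers the same-sign repulsive case $\alpha,\beta<0$ that the paper mentions later, and only the mixed-sign case, where the claim fails, has to be excluded.
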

\begin{proof}
The argument is analogous to the one used in \cite[Thm.~3.6]{BEL14} for attractive $\delta'$ interactions on a Lipschitz partition of a plane; let us sketch it. Consider complex numbers of modulus one, $z_1=1$ and $z_2=-1$ if $N$ is even and $z_{2,3}=\frac12\big(-1\pm\sqrt{3}\big)$ in $N$ is odd; for any $i\ne j$ we obviously have $|z_i-z_j|^2=4\sin^2\frac{\pi}{\chi}$, four if $N$ is even and three if it is odd. We put $\alpha_{\cZ}:= \frac{1}{\beta}\sin^2\frac{\pi}{\chi}$ so by assumption $\alpha_{\cZ}\le\alpha$. Denoting $I_j:=(y_j,y_{j+1})$, we define the operator $U_{\cZ}$ by
\begin{equation} \label{1unit}
(U_{\cZ}f)(x) = z_jf_j(z) \quad\text{for}\;\; x\in I_f,\;\;\text{where} \\;\;\; f_j=f\big|_{I_j}.
\end{equation}
The operator is obviously unitary and allows us to define the form $\breve{q}_\beta[f]:= q_\beta[U_{\cZ} f]$ with $\mathrm{dom}\,\breve{q}_\beta = \mathrm{dom}\,q_\beta$, which satisfies $\breve{q}_\beta \le q^\delta_\alpha$. Indeed, we have
\begin{align*}
\breve{q}_\beta[f] & = \sum_{j=1}^N \|z_j\nabla f_j\|^2_{L^2(I_j)} + \frac{1}{\beta} \sum_{j=1}^N |z_j-z_{j+1}|^2 |f(y_j)|^2 \\
& = \|\nabla f\|^2_{L^2(I)} + \alpha_{\cZ} \sum_{j=1}^N |f(y_j)|^2 = q^\delta_{\alpha_{\cZ}}[f] \le q^\delta_{\alpha}[f].
\end{align*}
for any $f\in \mathrm{dom}\,q_{\alpha_{\cZ}} = H^1(I) \subset \bigoplus_{j=1}^N H^1(I_j) = \mathrm{dom}\,\breve{q}_\beta$.
\end{proof}

\begin{corollary} \label{cor:1even}
Suppose that $N$ is even and $\beta>0$, then the ground state is simple and we have $\lambda_1(\beta,Y) \le \lambda_1(\beta,\tilde{Y})$, the inequality being saturated if and only if $Y$ is congruent with $\tilde{Y}$. The function $\lambda_1(\cdot,\tilde{Y})$ is increasing with $\lambda_1(\beta,\tilde{Y})=-\frac{4}{\beta^2}+\OO(\beta^{-1})$ as $\beta\to 0+$ and $\lambda_1(\beta,\tilde{Y})=-\frac{2N}{\beta}+\OO(\beta^{-2})$ as $\beta\to\infty$.
\end{corollary}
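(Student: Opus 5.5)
The plan is to sandwich $\lambda_1(\beta,Y)$ between two $\delta$-type quantities that coincide for the symmetric configuration. Throughout, $N$ is even and $\beta>0$, and we take $z_j=(-1)^j$ in the unitary $U_{\cZ}$ of Proposition~\ref{prop:1bound}.

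\emph{Upper bound.} For $g\in H^1(I)$ (continuous, periodic), $f=U_{\cZ}g$ has jumps $[f](y_j)=\pm 2g(y_j)$. Hence $q_{\beta,Y}[U_{\cZ}g]=q^\delta_{4/\beta,Y}[g]$, an attractive $\delta$ form with $\alpha=4/\beta$; the signs in the display of that proof must be read this way. Minimax then gives $\lambda_1(\beta,Y)\le\lambda^\delta_1(4/\beta,Y)$. The attractive case of Proposition~\ref{prop:1repuls} gives in turn $\lambda^\delta_1(4/\beta,Y)\le\lambda^\delta_1(4/\beta,\tilde Y)$, with equality if and only if $Y$ is congruent with $\tilde Y$.

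\emph{Lower bound for $\tilde Y$.} Write a general $f\in H^1(I\setminus Y)$ as $f=U_{\cZ}g$ with $g\in H^1(I\setminus Y)$. Then
$$q_{\beta,Y}[f]=\|g'\|^2-\beta^{-1}\sum_j|g(y_j+)+g(y_j-)|^2 .$$
Using $|a+b|^2\le 2|a|^2+2|b|^2$, this is bounded below by a direct sum over the intervals $I_j$ of Robin forms $\|g_j'\|^2-\frac{2}{\beta}\big(|g_j(y_j+)|^2+|g_j(y_{j+1}-)|^2\big)$. The lowest eigenvalue of the $j$-th form is $-\kappa_j^2$, where $\kappa_j\tanh\frac{\kappa_j|I_j|}{2}=\frac{2}{\beta}$. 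For $\tilde Y$ all $|I_j|=\frac{2\pi}{N}$, so this equation becomes $2\kappa\tanh\frac{\pi\kappa}{N}=\frac{4}{\beta}$. This is exactly the equation of Remark~\ref{rem:small}(b) for $\lambda^\delta_1(4/\beta,\tilde Y)$. Thus
$$\lambda^\delta_1(4/\beta,\tilde Y)\le\lambda_1(\beta,\tilde Y)\le\lambda^\delta_1(4/\beta,\tilde Y),$$
and chaining with the upper bound gives the inequality together with the equality case.

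\emph{Simplicity and the remaining claims.} In the $g$-picture the form satisfies $\breve q[|g|]\le\breve q[g]$, because $|g(y+)|+|g(y-)|\ge|g(y+)+g(y-)|$. By the Beurling--Deny criterion the semigroup of $U_{\cZ}^{-1}H_{\beta,Y}U_{\cZ}$ is therefore positivity preserving. For irreducibility, note that a ground state $g\ge0$ satisfies $g'(y_j\pm)=\mp\beta^{-1}(g(y_j+)+g(y_j-))$, an ODE condition coupling neighbouring intervals. This should give strict positivity, and hence simplicity of the ground state for every $Y$.

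Monotonicity of $\lambda_1(\cdot,\tilde Y)$ follows from the fact that the form is increasing in $\beta$. Strictness comes from the explicit equation $2\kappa\tanh\frac{\pi\kappa}{N}=\frac4\beta$, whose root $\kappa$ decreases as $\beta$ grows. The asymptotics follow from expanding the same equation:
\begin{itemize}
\item as $\beta\to0+$, $\tanh\to1$ gives $\kappa\approx 2/\beta$, i.e.\ $\lambda_1(\beta,\tilde Y)\approx -4/\beta^2$;
\item as $\beta\to\infty$, $2\pi\kappa^2/N\approx 4/\beta$ gives $\lambda_1(\beta,\tilde Y)\approx -\frac{2N}{\pi\beta}$, which is the stated $-\frac{2N}{\beta}$ up to a factor of $\pi$. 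I expect this is the normalization $\ell=2\pi$, to be checked.
\end{itemize}

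The main obstacle I expect is the irreducibility step needed for simplicity when $Y$ is general. The remaining steps are routine once the sign conventions in Proposition~\ref{prop:1bound} are fixed.
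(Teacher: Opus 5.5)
Your upper bound is the paper's: $q_{\beta,Y}[U_{\cZ}g]=q^\delta_{4/\beta,Y}[g]$ (with the sign in that display corrected as you do), followed by Proposition~\ref{prop:1repuls}. You depart from the paper in proving $\lambda_1(\beta,\tilde Y)=\lambda^\delta_1(4/\beta,\tilde Y)$.

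The paper transplants the $\delta$ ground state, $(-1)^jf_{\max}$, to an eigenfunction of $H_{\beta,\tilde Y}$ at that energy. It then excludes anything lower by asserting that a lower eigenfunction would split into parts symmetric or antisymmetric with respect to all points of $\tilde Y$. That classification is not justified there: under the reflections through points of $\tilde Y$, mixed symmetry types occur, and for $N\ge 6$ two-dimensional ones as well.

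Your Robin bracketing via $|a+b|^2\le 2|a|^2+2|b|^2$ gives the matching lower bound purely variationally. As a by-product it yields $\lambda_1(\beta,Y)\ge-\max_j\kappa_j^2$ for every $Y$. The paper's route, in exchange, exhibits the ground state explicitly.

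The step that is not yet a proof is simplicity: ``this should give strict positivity'' is where your argument stops. It is easy to finish in either reading.

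\emph{Reading for $\tilde Y$ (all the paper's proof covers).} Your sandwich already settles it. For a ground state $g$ of $U_{\cZ}^{-1}H_{\beta,\tilde Y}U_{\cZ}$, both of your inequalities must be equalities. Hence $g(\tilde y_j+)=g(\tilde y_j-)$, and $g|_{I_j}=c_j\cosh\kappa(x-m_j)$ with $m_j$ the midpoint of $I_j$. Continuity then forces $c_1=\dots=c_N$, so the eigenspace is one-dimensional, and you recover the eigenfunction of Remark~\ref{rem:attr_only}(a).

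\emph{Reading for general $Y$.} The irreducibility you flag is elementary.
\begin{itemize}
\item The matching conditions read $g'(y_j+)=-g'(y_j-)$ and $g(y_j+)=\beta g'(y_j-)-g(y_j-)$. So Cauchy data pass bijectively across each $y_j$, and an eigenfunction vanishing on one $I_j$ vanishes identically.
\item A nonnegative nonzero solution of $g''=\kappa^2 g$ is strictly positive inside its interval.
\item The operator is real, so take a real ground state $g$. By your Beurling--Deny inequality and minimality, $|g|$ lies in the eigenspace, hence so do $g_\pm=\frac12(|g|\pm g)\ge 0$.
\item Since $g_+g_-=0$, the first two points force one of $g_\pm$ to vanish. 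Thus every real ground state has a strict sign, and two of them cannot be orthogonal.
\end{itemize}

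Your weak-coupling computation is correct, and the discrepancy is not a normalization issue. With $\ell=2\pi$, the equation $\kappa\tanh\frac{\pi\kappa}{N}=\frac{2}{\beta}$ gives $\lambda_1(\beta,\tilde Y)=-\frac{2N}{\pi\beta}+\OO(\beta^{-2})$. This agrees with the weak-coupling slope $\frac{N}{2\pi}$ of Remark~\ref{rem:small}(b) at $\alpha=4/\beta$. The $-\frac{2N}{\beta}$ in the statement is a misprint, since the paper derives it from the same equation. At strong coupling, the correction to $-4\beta^{-2}$ is in fact exponentially small.
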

\begin{proof}
Since $\chi=2$ in this case, $\lambda_1(\beta,Y) \le \lambda^\delta_1(4\beta^{-1},Y)$ holds by Proposition~\ref{prop:1bound}, thus Proposition~\ref{prop:1repuls} implies that $\lambda_1(\beta,Y) \le \lambda^\delta_1(4\beta^{-1},\tilde{Y})$ for any $Y$. Let $f_\mathrm{max}$ be the eigenfunction of $H^\delta_{4\beta^{-1},\tilde{Y}}$ corresponding to $\lambda^\delta_1(4\beta^{-1},\tilde{Y})$, that is, satisfying $-f_\mathrm{max}''(x) =\lambda^\delta_1(4\beta^{-1},\tilde{Y})f_\mathrm{max}(x)$ for $x\in I\setminus\tilde{Y}$ and $f_\mathrm{max}'(\tilde{y}_j\pm) = \pm 2\beta^{-1}f_\mathrm{max}(\tilde{y}_j)$ with $\tilde{y}_j=\frac{\pi}{N}(2j-1)$. The function $\breve{f}_\mathrm{max}:\, \breve{f}_\mathrm{max}(x) = (-1)^j f_\mathrm{max}(x)$ for $x\in I_j$ satisfies the same equation; its derivative is continuous while the value switches sign at the points $\tilde{y}_j$ satisfying the conditions
$$
\breve{f}_\mathrm{max}(\tilde{y}_j+) - \breve{f}_\mathrm{max}(\tilde{y}_j-) = \beta \breve{f}'_\mathrm{max}(\tilde{y}_j),
$$
in other words, it is nothing but an eigenfunction of $H(\beta,\tilde{Y})$ corresponding to the eigenvalue $\lambda^\delta_1(4\beta^{-1},\tilde{Y})$. Among the eigenvalues of $H(\beta,\tilde{Y})$, this one is the lowest, $\lambda_1(\beta,\tilde{Y})$. Suppose that this is not the case. Such a ground state, possibly degenerate, would be between the points of $\tilde{Y}$ a hyperbolic function of $\kappa x$ with $\kappa> \sqrt{-\lambda^\delta_1(4\beta^{-1},\tilde{Y})}$ and in view of the symmetry of $\tilde{Y}$ it would be a linear combination of functions either symmetric or antisymmetric with respect to all the points of $\tilde{Y}$. However, the former will not satisfy conditions \eqref{1Ddelta'} while the latter would by the inverse unitary transformation produce an eigenfunction of $H^\delta_{\alpha,\tilde{Y}}$ corresponding to an eigenvalue below $\lambda^\delta_1(4\beta^{-1},\tilde{Y})$ which is a contradiction. From the same reason the ground state of $H(\beta,\tilde{Y})$ cannot be degenerate. The behavior of the ground state eigenvalue follows easily from the relation $\kappa\tanh\frac{\pi\kappa}{N}=\frac{2}{\beta}$ determining it, which follows from the equation mentioned in Remark~\ref{rem:small}(b) with the substitution $\alpha=4\beta^{-1}$
\end{proof}

The ground-state eigenfunctions of the optimizing configuration can be easily found; as an example we plot the one for $N=4$.
	\begin{figure}[htb!]
		\centering
		\includegraphics[scale=0.4]{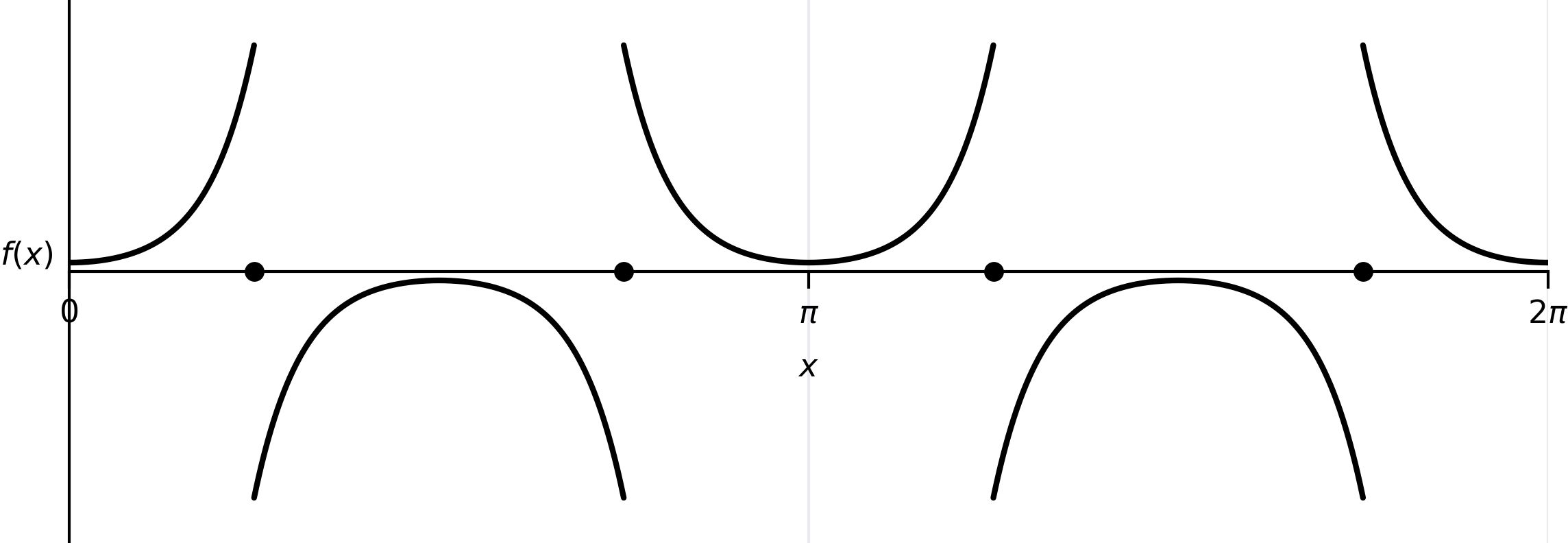}
		\caption{The eigenfunction corresponding to $\lambda_1(\beta,\tilde{Y}_4)$ for $\beta=\frac25$, the bullets indicate the $\delta'$ interactions positions.}
	\end{figure}
\begin{remark} \label{rem:attr_only}
{\rm (a) $H(\beta,\tilde{Y})$ has for each $\beta\in\big(0,\frac{2\pi}{N}\big)$ an eigenvalue with the eigenfunction locally antisymmetric with respect to all the points of $\tilde{Y}$; it consists of the segments of the form $\sinh\kappa\big(x-\frac12(\tilde{y}_j+\tilde{y}_{j+1})\big)$. The corresponding eigenvalue $-\kappa^2$ comes from solution to $\kappa\coth\frac{\pi\kappa}{N}=\frac{2}{\beta}$. Comparing this to $\kappa\tanh\frac{\pi\kappa}{N}=\frac{2}{\beta}$, we see this eigenvalue is strictly larger that $\lambda_1(\beta,\tilde{Y})$. It is also not difficult to find the eigenfunction refererring to $\lambda_1(\beta,\tilde{Y})$; without normalization, it equals $(-1)^j \cosh\kappa\big(x-\frac12(\tilde{y}_j+\tilde{y}_{j+1})\big)$ for $x\in(\tilde{y}_j,\tilde{y}_{j+1})$.\\
(b) While the claim of Proposition~\ref{prop:1bound} holds for both coupling constant signs, the above corollary does not extend to the repulsive case even if we assume $\beta<\frac{4}{\alpha_N}$ only. The reason is that the ground state eigenvalue of $H_{\beta,\tilde{Y}}$ does not coincide in this case with that of $H^\delta_{\alpha,\tilde{Y}}$. Indeed, the latter is positive, however, the former is zero (for any $Y$) as pointed out in Proposition~\ref{1disc}.}
\end{remark}

The above argument fails if $N$ is odd, in particular, because in analogy with \cite[Cor.~3.12]{BEL14} the unitary operator of Proposition~\ref{prop:1bound} does not exist for $\beta> \frac{4}{\alpha}\sin^2\frac{\pi}{\chi}$ and $\chi>2$ now. Instead one can use the Krein's formula expression of the resolvent, which in analogy with \cite[Thm.~II.3.1]{AGHH} now reads
   \begin{align*}
     & (H_{\beta,Y}+\kappa^2)^{-1} = G_\kappa + \sum_{j,j'=1}^N [\Gamma_{\beta,Y}(\kappa)]_{jj'}^{-1} \big( \overline{\tilde{\GG}_\kappa(\cdot - y_{j'})}, \cdot \big) \tilde{\GG}_\kappa(\cdot - y_j), \\[.3em] 
     & \Gamma_{\beta,Y}(\kappa) = [-\beta^{-1}\delta_{jj'} + \kappa^2 G_\kappa(y_j-y_{j'})]_{j,j'=1}^N\,, \label{delta'krein2} \\[1em]
     & \tilde{\GG}_\kappa(x-y) = \mathrm{sgn}(x-y)\kappa G_\kappa(x-y). 
   \end{align*}
where $G_\kappa$ is again the free resolvent given by \eqref{1Dkernel}, now for $k=i\kappa$,
\begin{equation} \label{1Dkernel_attr}
     G_\kappa(x-y) = \frac{\cosh(\kappa(\pi-|x-y|)}{2\kappa\sinh\pi\kappa}.
\end{equation}
We state Krein's formula in this form because in view of Remark~\ref{rem:attr_only} it makes sense to consider attractive $\delta'$ interactions only, $\beta>0$. In that case we have $\lambda_1(\beta,Y)=-\kappa^2$, with $\kappa$ being the smallest positive root of
\begin{equation} \label{1secular}
\det\,\Gamma_{\beta,Y}(\kappa) = 0,
\end{equation}
or alternatively the solution to
$$ 
\inf\sigma\big(\Gamma_{\infty,Y}(\kappa)\big) = \frac{1}{\beta}.
$$ 
More generally, the negative eigenvalues $\lambda_j(\beta,Y)$ are equal to $-\kappa_j^2$, where $\kappa_j$ is the solution to
$$ 
\mu_j(\kappa) = \frac{1}{\beta},
$$ 
where $\mu_j(\kappa)$ is the $j$th eigenvalue of $\Gamma_{\infty,Y}(\kappa)$. These solutions are unique, since  the matrix-valued function $\Gamma_{\infty,Y}(\cdot)$ is monotonously increasing as the spectral parameter $\kappa$ runs through $(0,\infty)$, cf. \cite{Kr46}, with $\lim_{\kappa\to\infty}\mu_j(\kappa)=\infty$; note that for large $\kappa$ the matrix $\Gamma_{\infty,Y}(\kappa)$ is dominated by its diagonal the entries of which are $\frac{\kappa}{2}\coth\pi\kappa$.

As we know from Proposition~\ref{1disc}, the $N$th negative eigenvalue may or may not exist depending on the value of $\beta$. Indeed, the largest eigenvalue of $\Gamma_{\infty,Y}(\kappa)$ is simple by Perron-Frobenius and in the limit $\kappa\to 0$ all the entries of the matrix become $(2\pi)^{-1}$, hence $\lim_{\kappa\to 0} \mu_N(\kappa)= \frac{N}{2\pi}$ meaning that $\lambda_N(\beta,Y)<0$ holds if and only if $\beta < \frac{2\pi}{N}$. On the other hand, in accordance with Proposition~\ref{1disc} we have $\lambda_j(\beta,Y)<0$ for $j=1,\dots,N-1$ and any coupling $\beta>0$ because $\lim_{\kappa\to 0} \mu_j(\kappa)= 0$.

\begin{example} \label{ex:threepoint}
{\rm Consider the case $N=3$ and denote $d_{ij}=y_i-y_j$ cyclically; then we have $d_{31}=2\pi-(d_{12}+d_{23})$. For simplicity, we put $g_{ij}=G_\kappa(d_{ij})$; the Perron-Frobenius eigenvalue is squeezed between $\max_i \sum_j g_{ij}$ and $\min_i \sum_j g_{ij}$. The function $f(d):=\cosh\kappa(\pi-d)$ is strictly convex, hence $f(d_{12}+d_{23}+d_{31})\ge 3f\big(\frac23\pi\big)$, the inequality being saturated \emph{iff} $d_{12}=d_{23}=d_{31}=\frac23\pi$. Consequently, the sum $\frac13\big(3g_{ii}+2(d_{12}+d_{23}+d_{31})\big)$ appearing in both the indicated bounds is strictly minimized by the equidistant configuration for which $\mu_3(\kappa)=a+2b$, where $a=\frac{\kappa}{2}\coth\pi\kappa$ and $b=\frac{\kappa\cosh(\pi\kappa/3)}{2\sinh\pi\kappa}$; this gives $\mu_3(\kappa)=\frac{\kappa}{2}\coth\frac{\pi\kappa}{3}$ corresponding to then eigenvector $3^{-1/2}(1,1,1)$, and therefore the highest negative eigenvalue of $H(\beta,Y_3)$ is \emph{strictly minimized} by the one with the maximum symmetry, $\lambda_3(\beta,Y_3) > \lambda_3(\beta,\tilde{Y}_3)$, unless $Y_3$ is congruent with $\tilde{Y}_3$.

Furthermore, the two remaining eigenvalues of $\Gamma_{\beta,\tilde{Y}_3}(\kappa)$ coincide mutually,
$$
\mu_1(\kappa)=\mu_2(\kappa)=a-b = \frac{\kappa \sinh\frac{2\pi\kappa}{3}}{1+2\cosh\frac{2\pi\kappa}{3}}
$$
with the eigenvectors $3^{-1/2}(1,\ee^{\pm 2\pi i/3},\ee^{\pm 4\pi i/3})$, and as a result, the ground state eigenvalue of $H(\beta,\tilde{Y}_3)$ has \emph{multiplicity two}. In the weak coupling case, $\beta\to\infty$, we have $\lambda_1(\beta,\tilde{Y}_3) = \lambda_2(\beta,\tilde{Y}_3) = -\frac{3}{2\pi\beta} + \OO(\beta^{-2})$, while for strong coupling, $\beta\to 0+$, all the three eigenvalues behave as $\lambda_j(\beta,\tilde{Y})=-\frac{4}{\beta^2}+\OO(\beta^{-1})$, the differences concerning the error term only.}
\end{example}

\begin{theorem} \label{thm:1lambdaN}
For any $\beta\in\big(0,\frac{2\pi}{N}\big)$, the eigenvalue $\lambda_N(\beta,Y)$ of $H(\beta,Y)$ is mini\-mized, uniquely up to shifts, by the symmetric configuration, $Y=\tilde{Y}$, the eigenvalue $-\kappa^2$ being determined by the condition $\kappa \coth\!\frac{\kappa\pi}{N} = \frac{2}{\beta}$.
\end{theorem}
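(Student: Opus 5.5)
We follow the route sketched in Example~\ref{ex:threepoint} for $N=3$ and extend it to general $N$ through the Krein-formula characterization recalled above. We have $\lambda_N(\beta,Y)=-\kappa_N^2$, where $\kappa_N$ is the unique solution of $\mu_N(\kappa)=\frac1\beta$. Here $\mu_N(\kappa)$ is the largest (Perron--Frobenius) eigenvalue of the positive symmetric matrix $\Gamma_{\infty,Y}(\kappa)=[\kappa^2G_\kappa(y_j-y_{j'})]$. The condition $\beta<\frac{2\pi}{N}$ guarantees that the solution exists, since $\mu_N(0+)=\frac{N}{2\pi}$ and $\mu_N$ is increasing and unbounded. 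It therefore suffices to show that, for every fixed $\kappa>0$,
$$
\mu_N(\kappa;Y)\ge\mu_N(\kappa;\tilde Y), \quad\text{with equality iff } Y \text{ is a shift of } \tilde Y.
$$
Indeed, by monotonicity in $\kappa$, a larger curve $\mu_N(\cdot;Y)$ meets the level $\frac1\beta$ at a smaller $\kappa$. This gives $\lambda_N(\beta,Y)\ge\lambda_N(\beta,\tilde Y)$, with strict inequality otherwise.

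The inequality itself comes from the Rayleigh quotient with the trial vector $e=N^{-1/2}(1,\dots,1)$:
$$
\mu_N(\kappa;Y)\ge (e,\Gamma_{\infty,Y}(\kappa)e)=\frac{\kappa^2}{N}\sum_{j,j'}G_\kappa(y_j-y_{j'}) .
$$
For $\tilde Y$, the matrix is circulant, so $e$ is its Perron eigenvector and equality holds. It remains to show that $S(Y)=\sum_{j,j'}\cosh\kappa(\pi-|y_j-y_{j'}|_{\T})$ is minimized exactly by the equidistant configuration.

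We group the off-diagonal terms by ``cyclic distance order''. For fixed $m\in\{1,\dots,N-1\}$, let $d^{(m)}_j:=y_{j+m}-y_j \pmod{2\pi}$, with $j=1,\dots,N$. Then $\sum_j d^{(m)}_j=2\pi m$ for every $m$. Since $d\mapsto\cosh\kappa(\pi-d)$ is strictly convex on $[0,2\pi]$ (and even about $\pi$, so orientation of the distance is irrelevant), Jensen's inequality yields
$$
\sum_j\cosh\kappa(\pi-d^{(m)}_j)\ge N\cosh\kappa\bigl(\pi-\tfrac{2\pi m}{N}\bigr),
$$
with equality iff all $d^{(m)}_j$ coincide. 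The case $m=1$ already forces equal consecutive spacings, that is, $Y$ is a shift of $\tilde Y$. Summing over $m$ together with the diagonal terms gives $S(Y)\ge S(\tilde Y)$, with the required uniqueness.

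Finally, the explicit value follows from summing the circulant row for $\tilde Y$:
$$
\mu_N(\kappa;\tilde Y)=\frac{\kappa}{2\sinh\pi\kappa}\sum_{m=0}^{N-1}\cosh\kappa\bigl(\pi-\tfrac{2\pi m}{N}\bigr)=\frac{\kappa}{2}\coth\frac{\pi\kappa}{N}.
$$
This is a geometric-sum identity; as a check, for $N=3$ it reproduces Example~\ref{ex:threepoint}. Setting this equal to $\frac1\beta$ gives $\kappa\coth\frac{\pi\kappa}{N}=\frac2\beta$.

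The main point requiring care is the monotonicity step. We need $\mu_N(\cdot;Y)$ to be strictly increasing, which follows from the Krein monotonicity of $\Gamma_{\infty,Y}$ cited above. We also need the strict pointwise inequality at the relevant $\kappa$ to transfer to strict inequality of the roots, which follows since both curves are continuous and increasing.
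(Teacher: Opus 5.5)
Your proof is correct, and it differs from the paper's in the key step. The paper uses Perron--Frobenius together with a Fejes T\'oth type majorization. It replaces two neighbouring gaps $(\delta_j,\delta_{j+1})$ by their mean and asserts that this lowers the Perron--Frobenius eigenvalue whenever $\delta_j\ne\delta_{j+1}$. It then iterates until all gaps are equal. Only the final circulant row-sum evaluation coincides with yours.

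You instead use one trial vector, the uniform $e$, which is the exact Perron eigenvector for $\tilde Y$. This gives $\mu_N(\kappa;Y)\ge\frac1N\sum_{j,j'}[\Gamma_{\infty,Y}(\kappa)]_{jj'}\ge\mu_N(\kappa;\tilde Y)$. You prove the middle inequality by sorting the off-diagonal entries into cyclic $m$-th neighbour classes with $\sum_j d^{(m)}_j=2\pi m$ and applying Jensen class by class. This is the mechanism of Example~\ref{ex:threepoint} and of the $\delta$-interaction results in \cite{Ex19,EL18}.

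What your route buys is a complete argument. The paper's averaging step is stated rather than proved. Moving $y_{j+1}$ to the midpoint of $y_j,y_{j+2}$ changes its distances to all the other points, while convexity only controls the nearest-neighbour pair. The iteration would also need a limiting or compactness argument. Your route needs neither, and it does not use the monotonicity of $G_\kappa$ in the distance.

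You also spell out how the pointwise strict inequality in $\kappa$ gives the strict inequality between the roots of $\mu_N(\kappa)=\beta^{-1}$, which the paper leaves implicit. The paper's scheme, if completed, would give a monotone deformation of any $Y$ towards $\tilde Y$, which is more than the theorem needs.
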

\begin{proof}
Denote again $d_{jk} = \min\bigl\{|y_j-y_k|,\, 2\pi-|y_j-y_k|\bigr\}$ the geodesic distances on the loop. The largest eigenvalue of the matrix $\Gamma_{\infty,Y}(\kappa)$ equals
$$
\mu_\mathrm{\max}(\kappa) = \max_{\|c\|_2=1} \sum_{j,k=1}^{N} c_jc_k\,\kappa^2G_\kappa(d_{jk}).
$$
and, again by Perron-Frobenius, the corresponding eigenvector $c\in\C^N$ can be chosen positive. The function $\kappa^2G_\kappa(\cdot)$ is decreasing, hence an increase of pairwise distances decreases the matrix entries, and strictly convex. Next we denote $\delta_j:=y_{j+1}-y_j$, cyclically, the successive gaps between the points of $Y$; they are positive and $\sum_{j=1}^N \delta_j=2\pi$. The convexity allows us to apply a Fejes T\'oth type majorization argument, replacing neighboring gaps $(\delta_j,\delta_{j+1})$ by their arithmetic mean, $(\delta_j,\delta_{j+1}) \mapsto \left( \frac12(\delta_j+\delta_{j+1}), \frac12(\delta_j+\delta_{j+1})\right)$ which makes the Perron-Frobenius eigenvalue smaller whenever $\delta_{j+1}\ne \delta_j$. Iterating this averaging procedure, we arrive at the conclusion that the minimum of the form is reached when all the gaps are equal, $\delta_j=\frac{2\pi}{N},\, j=1,\dots,N$, i.e. if $Y$ is congruent with $\tilde{Y}$.

One can also write the equation determining the extremal value of $\lambda_N(\beta,Y)$. For a symmetric $Y$ the matrix $\Gamma_{\infty,Y}(\kappa)$ is circulant \cite{Da79} and its largest eigenvalue equals the sum of any row or column,
$$
\min_Y \mu_\mathrm{\max}(\kappa) = \sum_{m=0}^{N-1} \frac{\kappa}{2\sinh(\pi\kappa)} \cosh\!\left( \kappa \Bigl( \pi-\frac{2\pi}{N}\min(m,N-m) \Bigr) \right) = \frac{\kappa}{2}\coth\!\left(\frac{\kappa\pi}{N}\right),
$$
which in view of \eqref{1secular} gives rise to the spectral condition $\kappa \coth\!\frac{\kappa\pi}{N} = \frac{2}{\beta}$ which we have already encountered in Remark~\ref{rem:attr_only}.
\end{proof}

The eigenfunction corresponding the symmetric configuration minimizing $\lambda_5(\beta,Y)$ is again easy to find; we plot an example with $N=5$.
	\begin{figure}[htb!]
		\centering
		\includegraphics[scale=0.4]{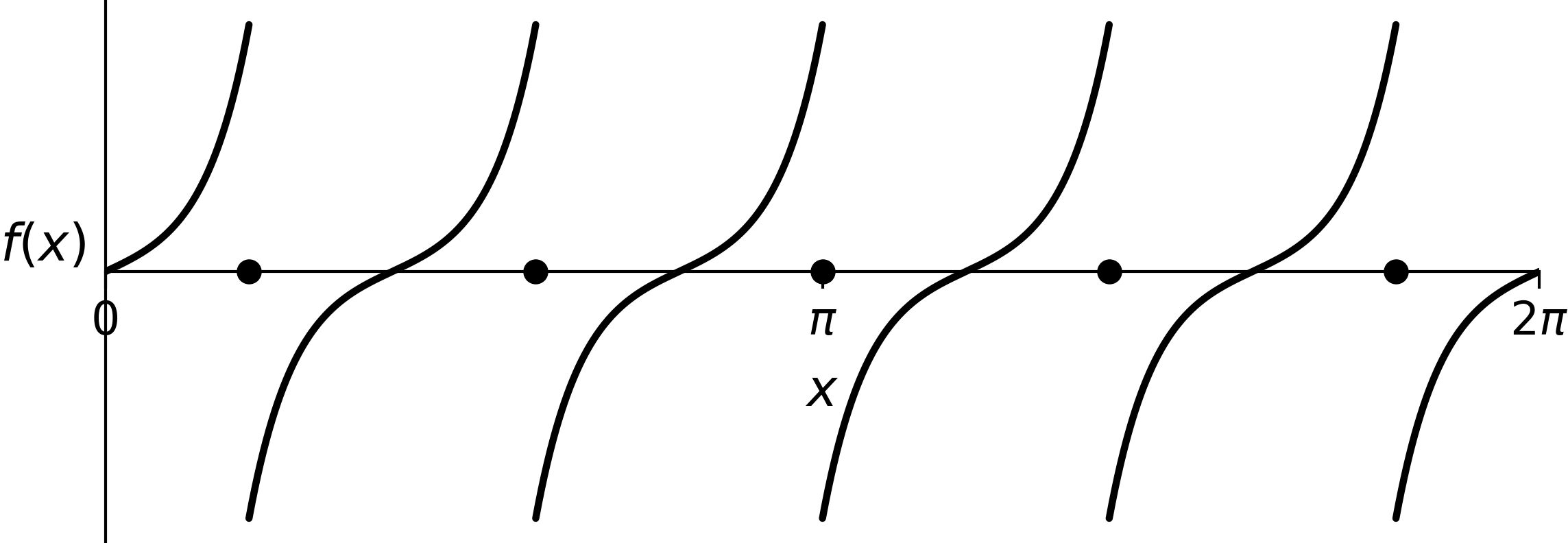}
		\caption{The eigenfunction corresponding to $\lambda_5(\beta,\tilde{Y})$ for $\beta=\frac25$.}
	\end{figure}

Let us now return to the question of our main concern, namely maximization of the ground state. Here, too, the symmetry of $Y$ plays the fundamental role.
\begin{theorem} \label{thm:1lambdaN}
The eigenvalue $\lambda_1(\beta,Y)$ of $H(\beta,Y)$ is maxi\-mized, uniquely up to shifts, by the symmetric configuration, $Y=\tilde{Y}$. The multiplicity of $\lambda_1(\beta,\tilde{Y})$ is one for $N$ even and two for $N$ odd.
\end{theorem}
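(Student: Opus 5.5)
We use the Birman--Schwinger characterization recalled above: $\lambda_1(\beta,Y)=-\kappa^2$, where $\kappa$ solves $\mu_1(\kappa;Y)=\frac1\beta$. Here $\mu_1(\kappa;Y)$ is the lowest eigenvalue of $\Gamma_{\infty,Y}(\kappa)=[\kappa^2G_\kappa(y_j-y_{j'})]$. Each $\mu_j(\cdot;Y)$ is increasing in $\kappa$. Maximizing $\lambda_1$ therefore amounts to minimizing $\kappa$, and it suffices to show that for every fixed $\kappa>0$
$$
\mu_1(\kappa;Y)\le\mu_1(\kappa;\tilde Y),
$$
with equality only when $Y$ is congruent to $\tilde Y$. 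Indeed, if $\mu_1(\cdot;Y)\le\mu_1(\cdot;\tilde Y)$ pointwise, then the root for $Y$ lies to the right of the root for $\tilde Y$. Strict inequality at the root for $\tilde Y$ then gives strict inequality of the eigenvalues.

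To prove this inequality, write $\mu_1(\kappa;Y)=\min_{\|c\|_2=1}\sum_{j,k}\bar c_jc_k\,\kappa^2G_\kappa(y_j-y_k)$ and test it with the Fourier-type vectors
$$
c^{(m)}_j=N^{-1/2}\ee^{imj\cdot 2\pi/N}, \qquad m=1,\dots,N-1.
$$
Alternatively, use a sign pattern: $c_j=(-1)^j N^{-1/2}$ for even $N$, and the pair $m=\pm\lfloor N/2\rfloor$ for odd $N$. For $\tilde Y$ these vectors are the eigenvectors of the circulant matrix. The lowest circulant eigenvalue is attained at $m=N/2$ for even $N$ (simple) and at $m=(N\pm1)/2$ for odd $N$ (doubly degenerate). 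This already yields the multiplicity claim, in agreement with Corollary~\ref{cor:1even} and Example~\ref{ex:threepoint}.

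The cleaner route is to expand the kernel in its Fourier series on the loop:
$$
\kappa^2G_\kappa(x)=\frac{\kappa^2}{2\pi}\sum_{n\in\Z}\frac{\ee^{inx}}{n^2+\kappa^2}.
$$
Then
$$
\la c,\Gamma c\ra=\frac{\kappa^2}{2\pi}\sum_n\frac{|\hat c(n)|^2}{n^2+\kappa^2}, \qquad \hat c(n)=\sum_j c_j\ee^{iny_j}.
$$
The trace $\mathrm{tr}\,\Gamma=\frac N2\kappa\coth\pi\kappa$ does not depend on $Y$. One could try to compare $\mu_1$ through the trace minus the sum of the upper eigenvalues, but it is more direct to bound $\mu_1(Y)$ from above by the Rayleigh quotient of a suitably chosen vector, for instance $c_j=\ee^{iMy_j}$-type vectors adapted to the actual positions. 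The aim is to show that this quotient, averaged appropriately, does not exceed the symmetric value $\mu_1(\kappa;\tilde Y)$.

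The main obstacle is precisely this step. Because the minimizing eigenvector is oscillating, Perron--Frobenius positivity and the simple convexity argument used for $\lambda_N$ in Theorem~\ref{thm:1lambdaN} are not available. What I would try first is a Fejes T\'oth-type two-gap averaging argument applied to the quadratic form with the alternating test vector $c_j=(-1)^j$. For even $N$ this reproduces the comparison with the $\delta$ problem through the unitary $U_{\cZ}$, and the claim then follows directly from Proposition~\ref{prop:1bound} and Corollary~\ref{cor:1even}. For odd $N$ I would use the complex test vector $c_j=\ee^{2\pi i jm/N}$ with $m=(N-1)/2$, where
$$
\la c,\Gamma c\ra=\frac1N\sum_{j,k}\cos\!\Big(\frac{2\pi m(j-k)}N\Big)\kappa^2G_\kappa(y_j-y_k).
$$
The task is to show that perturbing the gaps away from $2\pi/N$ increases the \emph{minimum} over such vectors, that is, decreases $\mu_1$. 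This requires controlling the sign-changing weights $\cos(2\pi m(j-k)/N)$ against the convexity of $\kappa^2G_\kappa$, and I expect the proof to require a careful majorization or rearrangement argument over the gap sequence. Uniqueness and the multiplicity statement follow from the circulant structure at $\tilde Y$.
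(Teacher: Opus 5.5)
\textbf{There is a genuine gap for odd $N$.}

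Several parts are fine:
\begin{itemize}
\item The reduction to the fixed-$\kappa$ comparison $\mu_1(\kappa;Y)\le\mu_1(\kappa;\tilde Y)$ is correct.
\item The even case may legitimately be taken from Corollary~\ref{cor:1even}.
\item The multiplicity count at $\tilde Y$ follows from the circulant eigenvalues $\frac{\kappa}{2}\sinh\frac{2\pi\kappa}{N}\big/\big(\cosh\frac{2\pi\kappa}{N}-\cos\frac{2\pi k}{N}\big)$, which are lowest for $k$ closest to $N/2$.
\end{itemize}

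For odd $N$, however, the core inequality and its strictness are not proved. You call it the main obstacle and only conjecture that some majorization argument will handle it.

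Testing $\Gamma_{\infty,Y}(\kappa)$ itself is the wrong place to look. Its entries depend on sums of several consecutive gaps. With the weights $\cos(2\pi m(j-k)/N)$, convexity of $G_\kappa$ pushes different pair classes in opposite directions. For $N=5$, $m=2$, Jensen bounds the nearest-neighbour contribution (weight $\cos\frac{4\pi}{5}<0$) from above by its symmetric value, but bounds the next-nearest one (weight $\cos\frac{2\pi}{5}>0$) only from below. Only for $N=3$, where all pairs are neighbours, does the argument close at once.

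Moreover, $\langle c,Ac\rangle\langle c,A^{-1}c\rangle\ge 1$ for $A>0$ and $\|c\|=1$. So a direct Rayleigh quotient of $\Gamma_{\infty,Y}(\kappa)$ is always a weaker upper bound on $\mu_1$ than one built from the inverse.

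Finally, uniqueness cannot ``follow from the circulant structure at $\tilde Y$''. It requires a strict inequality for every non-congruent $Y$, and the circulant structure describes $\tilde Y$ only.

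\textbf{The missing idea is to invert.} Your Fourier formula shows $\Gamma_{\infty,Y}(\kappa)>0$, so $\mu_1(\kappa;Y)=\kappa^2/\lambda_{\max}(M_Y)$ with $M_Y:=\kappa^2\,\Gamma_{\infty,Y}(\kappa)^{-1}$.

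The function $\psi=\sum_k c_kG_\kappa(\cdot-y_k)$ solves $-\psi''+\kappa^2\psi=0$ between the sites, with $\psi'(y_j-)-\psi'(y_j+)=c_j$ and $\psi(y_j)=\kappa^{-2}(\Gamma_{\infty,Y}(\kappa)c)_j$. Solving explicitly on each gap $\delta_j=y_{j+1}-y_j$ shows that $M_Y$ is the cyclic Jacobi matrix with $(M_Y)_{jj}=\kappa(\coth\kappa\delta_{j-1}+\coth\kappa\delta_j)$ and $(M_Y)_{j,j+1}=-\kappa/\sinh\kappa\delta_j$. Its quadratic form is local in the gaps:
$$
\langle v,M_Yv\rangle=\frac{\kappa}{2}\sum_{j=1}^N\Big[\tanh\frac{\kappa\delta_j}{2}\,|v_j+v_{j+1}|^2+\coth\frac{\kappa\delta_j}{2}\,|v_j-v_{j+1}|^2\Big].
$$

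What you need is a \emph{lower} bound on $\lambda_{\max}(M_Y)$. Take $v_j=N^{-1/2}\ee^{i\theta j}$ with $\theta=\pi$ for even $N$ and $\theta=\pi-\pi/N$ for odd $N$. Then $|v_j\pm v_{j+1}|$ does not depend on $j$, so $\langle v,M_Yv\rangle=\frac{\kappa}{N}\sum_j h(\delta_j)$ with $h$ strictly convex. Jensen with $\sum_j\delta_j=2\pi$ gives
$$
\lambda_{\max}(M_Y)\ge\langle v,M_Yv\rangle\ge\lambda_{\max}(M_{\tilde Y}),
$$
strictly unless all $\delta_j=2\pi/N$. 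This is exactly the strict comparison of $\mu_1$ you need, and it covers both parities at once.
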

\begin{proof}
We can no longer employ Perron-Frobenius, instead we rephrase the question in terms of the inverse matrix. The function \eqref{1Dkernel_attr} is even and symmetric with respect to $x=\pi$ and extends periodically to the real line; it satisfies $-G_\kappa''+\kappa^2G_\kappa=0$ for $x\neq 2\pi k,\, k\in\N$, and $G_\kappa'(0\pm)=\mp\tfrac12$. Knowing the spectrum of the free operator on the loop, one write it also as $G_\kappa(x)=\frac1{2\pi}\sum_{n\in\Z}\frac{e^{inx}}{n^2+\kappa^2}$, which means that
$$
\la c,\Gamma_{\infty,Y}(\kappa)c\ra =\frac{\kappa^2}{2\pi}\sum_{n\in\Z}\frac{\big|\sum_{j}c_je^{iny_j}\big|^2}{n^2+\kappa^2}>0
$$
holds for any nonzero $c\in\C^N$ since for distinct $y_j$ the trigonometric sums cannot all vanish. The matrix $\Gamma_{\infty,Y}(\kappa)$ is thus positive and so is its inverse, and we have
\begin{equation} \label{invtrick}
\mu_\mathrm{\min}(\kappa)=\big(\nu_\mathrm{\max}(\kappa)\big)^{-1},
\end{equation}
where $\nu_\mathrm{\max}(\kappa)$ is the maximal eigenvalue of $\Gamma_{\infty,Y}(\kappa)^{-1}$. To use relation \eqref{invtrick}, one has derive the inverse matrix which appears a particular Jacobi matrix. To find it, note that for any given $c\in\C^N$, the function $\psi=\sum_{j'}c_{j'}\,G_\kappa(\,\cdot\,-y_{j'})$ solves the equation $-\psi''+\kappa^2\psi=0$ on each interval $(y_j,y_{j+1})$, is continuous there and has derivative jumps at the endpoints, $\psi'(y_j-)-\psi'(y_j+)=c_j$. Its values at the points $y_j$, expressed in terms of $\Gamma_{\infty,Y}(\kappa)$, are $v_j:=\psi(y_j)=\kappa^{-2}(\Gamma_{\infty,Y}(\kappa)c)_j$.

Now we need to express the vector $c$ corresponding to given $v$. To this aim, we use the explicit form of the function $\psi$ as a solution to $-\psi''+\kappa^2\psi=0$: on
an interval of length $\ell$ with end values $a,b$ one has
$$
\psi(x)=\frac{a\sinh\kappa(\ell-x)+b\sinh\kappa x}{\sinh\kappa\ell}
$$
so computing the jumps $c_j$ of $\psi'$ from the values $v_k$ gives $c=M_Yv$ with
$$ 
(M_Y)_{jj}:=\kappa\big(\coth\kappa\delta_{j-1}+\coth\kappa\delta_j\big),
\quad
(M_Y)_{j,j+1}:=(M_Y)_{j+1,j}=-\frac{\kappa}{\sinh\kappa\delta_j}
$$ 
where $\delta_j=y_{j+1}-y_j$ are the distances of the neighboring points introduced in the previous proof and the indices are again taken modulo $N$. The sought inverse matrix is thus $\Gamma_{\infty,Y}(\kappa)^{-1}= \kappa^{-2}M_Y$. Using further the identities $\coth t-\frac1{\sinh t}=\tanh\frac t2$ and $\coth t+\frac1{\sinh t}=\coth\frac t2$, one can write the quadratic form of the matrix $M_Y$ as a sum over the intervals between the neighboring points,
\begin{equation}\label{qformM_Y}
\la v,M_Yv\ra=\frac{\kappa}{2}\sum_{j=1}^N\Big[\tanh\!\Big(\frac{\kappa\delta_j}{2}\Big)\,|v_j+v_{j+1}|^2
+\coth\!\Big(\frac{\kappa\delta_j}{2}\Big)\,|v_j-v_{j+1}|^2\Big].
\end{equation}
This can be in view of \eqref{invtrick} used to estimate the eigenvalue in question,
\begin{equation} \label{invtrick2}
\mu_\mathrm{\min}(\kappa)= \frac{\kappa^2}{\lambda_{\max}(M_Y)} \;\le\;\frac{\kappa^2}{\la v,M_Yv\ra}
\end{equation}
which holds for any unit vector $v\in\C^N$. This is the key point: we have to find a vector which minimizes $\la v,M_Yv\ra$ and show that it corresponds to $Y=\tilde{Y}$.

We note that both $M_{\tilde{Y}}$ and $\Gamma_{\infty,\tilde{Y}}(\kappa)$ are circulant, and as such they have common eigenvectors the components are complex roots of unity, namely $v^{(k)}_j=N^{-1/2}e^{2\pi i kj/N}$, $k=0,\dots,N-1$; the corresponding eigenvalues are easily found to be
$$ 
\Lambda_k(M_{\tilde{Y}})=\frac{2\kappa}{\sinh\kappa\delta}\Big[\cosh\kappa\delta-\cos\frac{2\pi k}{N}\Big]
$$ 
and
\begin{equation} \label{tilded_ev}
\Lambda_k(\Gamma_{\infty,\tilde{Y}}(\kappa))=\frac{\kappa^2}{\lambda_k(M_{\tilde{Y}})}
=\frac{\kappa}{2}\,\frac{\sinh(2\pi\kappa/N)}{\cosh(2\pi\kappa/N)-\cos(2\pi k/N)},
\end{equation}
where $\delta=2\pi/N$ is the common value of the distance $\delta_j$. The eigenvalues \eqref{tilded_ev} differ from the naturally ordered ones, $\mu_j(\kappa),\,j=1,\dots,N$, we are interested, in by numeration only; we have $\mu_j=\Lambda_{j-1}$ for $N$ even and $\mu_j=\mu_{j+1}=\Lambda_{\lfloor{(N\pm j)/2\rfloor}}$ for $N$ odd. In particular, the smallest eigenvalue, $\mu_1(\kappa)$, corresponds to $k$ closest to $N/2$,
\begin{equation}\label{eq:mu_min_opt}
\lambda_1(\beta,\tilde{Y})=
\begin{cases}
\dfrac{\kappa}{2}\tanh\dfrac{\pi\kappa}{N}, & N \text{ even},\\[4mm]
\dfrac{\kappa}{2}\,\dfrac{\sinh(2\pi\kappa/N)}{\cosh(2\pi\kappa/N)+\cos(\pi/N)}, & N \text{ odd},
\end{cases}
\end{equation}
being simple in the former case and of multiplicity two in the latter.

This allows us show by means of \eqref{invtrick2} that the optimal configuration is equidistant using the eigenvectors corresponding to \eqref{eq:mu_min_opt}. Consider first $N$ even and take $v_j:=(-1)^j\,N^{-1/2}$, then $v_j+v_{j+1}=0$ and $|v_j-v_{j+1}|^2=\frac{4}{N}$, hence \eqref{qformM_Y} gives
$$
\la v,M_Yv\ra=\frac{2\kappa}{N}\sum_{j=1}^N\coth\frac{\kappa\delta_j}{2}\;\ge\;2\kappa\coth\frac{\pi\kappa}{N},
$$
by Jensen's inequality, since $\coth$ is strictly convex on $(0,\infty)$ and $\frac1N\sum_j\delta_j=2\pi/N$, the inequality being saturated if and only if all the $\delta_j$ are equal. This yields
$$
\mu_{\min}(\kappa)\le\frac{\kappa^2}{2\kappa\coth(\pi\kappa/N)}=\frac{\kappa}{2}\tanh\frac{\pi\kappa}{N},
$$
showing that $\mu_{\min}(\kappa)$ is uniquely maximized by configurations $Y$ congruent with $\tilde{Y}$.

The argument for $N$ odd is similar, just slightly more complicated. One uses now $v_j=N^{-1/2}\ee^{i\theta j}$ with $\theta=\pi-\pi/N$ (note that this does not violate the periodicity since $e^{i\theta N}=e^{i\pi(N-1)}=1$ holds for odd $N$); then we have
$$
|v_j+v_{j+1}|^2=\frac2N\Big(1-\cos\frac{\pi}{N}\Big),\quad
|v_j-v_{j+1}|^2=\frac2N\Big(1+\cos\frac{\pi}{N}\Big),
$$
and
$$
\la v,M_Yv\ra=\frac{\kappa}{N}\sum_{j=1}^N h(\delta_j) \quad \text{with }\;
h(\delta):=\Big(1-\cos\frac{\pi}{N}\Big)\tanh\frac{\kappa\delta}{2}+\Big(1+\cos\frac{\pi}{N}\Big)\coth\frac{\kappa\delta}{2},
$$
where $h$ is strictly convex: putting $t=\kappa\delta/2$, $a=1-\cos\frac\pi N$, $b=1+\cos\frac\pi N$, we find
$$
\frac{d^2}{dt^2}\big[a\tanh t+b\coth t\big]
=2\Big(b\,\frac{\cosh t}{\sinh^3 t}-a\,\frac{\sinh t}{\cosh^3 t}\Big)
=2\,\frac{\sinh t}{\cosh^3 t}\,\big(b\coth^4 t-a\big)>0,
$$
because $b>a$ and $\coth^4 t>1$. Jensen's inequality then yields $\sum_j h(\delta_j)\ge N h(2\pi/N)$, with equality valid if and only if all the $\delta_j=2\pi/N$; using $\tanh t+\coth t=2\coth 2t$ and $\coth t-\tanh t=2/\sinh 2t$ with $t=\pi\kappa/N$, we get
$$
\la v,M_Yv\ra\;\ge\;\frac{2\kappa\,\big[\cosh(2\pi\kappa/N)+\cos(\pi/N)\big]}{\sinh(2\pi\kappa/N)}.
$$
Consequently, $\mu_{\min}(\kappa)$ is bounded from above by the second expression in \eqref{eq:mu_min_opt} with the equality valid if and only if $\delta_1=\dots=\delta_N=2\pi/N$; this concludes the proof.
\end{proof}

\begin{example} \label{ex:odd_eigenf}
{\rm In case of $N$ odd, the ground-state eigenfunctions of $H(\beta,\tilde{Y})$ are more complicated. It is not possible now that all the boundary values at the discontinuities flip sign; what happens is that such an eigenfunction has zero derivative at one of the $\delta'$ sites which means that the operator does not `see' the interaction. As an example, consider $N=5$, where the non-normalized eigenfunction is given by
\begin{equation} \label{Nfive}
f(x)=
\begin{cases}
A\cosh\kappa x, & 0<x<\dfrac{2\pi}{5},\\[3mm]
-A\Big[\cosh\kappa\big(x-\tfrac{4\pi}{5}\big)
+c\,\cosh\kappa\big(x-\tfrac{2\pi}{5}\big)\Big],
& \dfrac{2\pi}{5}<x<\dfrac{4\pi}{5},\\[3mm]
2c\,A\cosh\dfrac{\pi\kappa}{5}\,\cosh\kappa(\pi-x),
& \dfrac{4\pi}{5}<x<\pi
\end{cases}
\end{equation}
together with the extension to $(\pi,2\pi)$ even with respect to $x=\pi$, plotted below, where $c=\frac{1+\sqrt{5}}{2}$ is the golden mean. Another eigenfunction is obtained by a shift on an odd of multiple of $\pi$, modulo $2\pi$; it is obviously linearly independent as it should be for an eigenvalue of multiplicity two.
}
\end{example}

	\begin{figure}[htb!]
		\centering
		\includegraphics[scale=0.4]{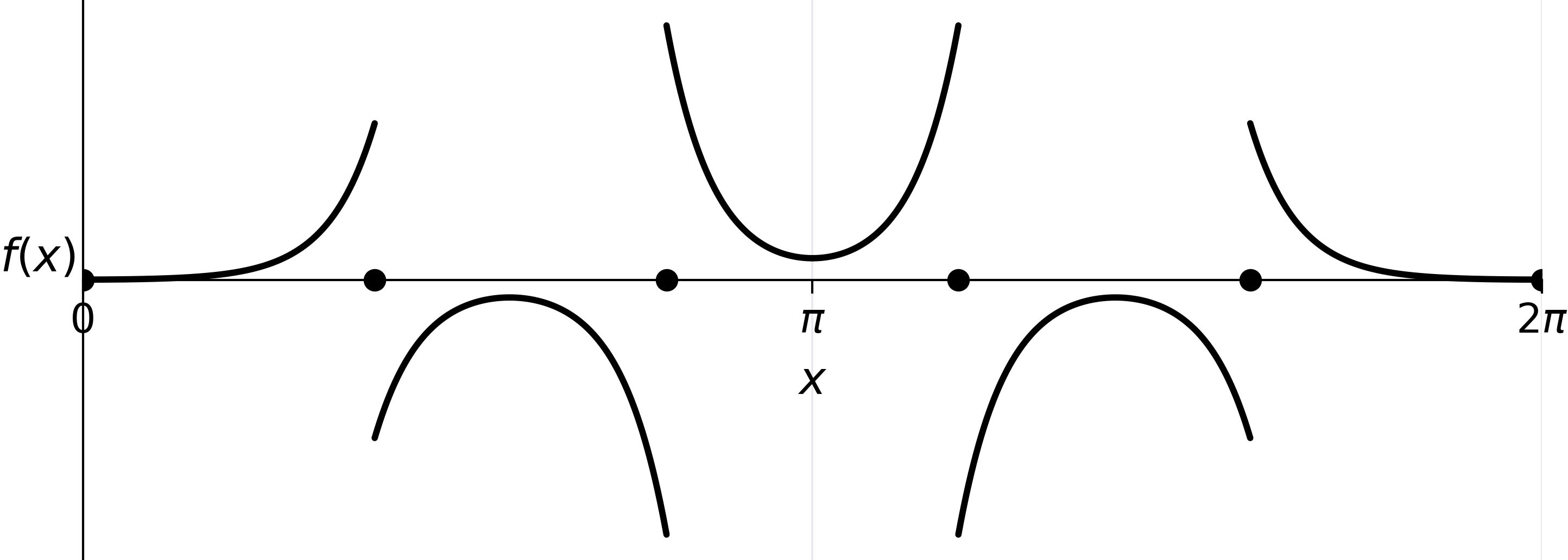}
		\caption{The eigenfunction corresponding to $\lambda_1(\beta,\tilde{Y}_5)$ for $\beta=\frac25$.}
	\end{figure}

\begin{remark} \label{rem:spectrum}
{\rm (a) As a byproduct, we have obtained the whole negative spectrum of $H(\beta,\tilde{Y})$, cf.~\eqref{tilded_ev}. It is simple for $N$ even, and of multiplicity two for $N$ odd, except possibly $\lambda_N(\beta,\tilde{Y})$ which is simple; recall that the eigenvalue $\mu_N(\kappa)=\Lambda_N(\Gamma_{\infty,\tilde{Y}}(\kappa))$ is always simple and we know from Remark~\ref{rem:attr_only}(a) that if gives rise to an eigenvalue of $H(\beta,\tilde{Y})$ if and only if $0<\beta<\frac{2\pi}{N}$. \\
(b) Note that, as for $N$ even, the ground-state eigenfunctions are discontinuous but have no zeros. This is no longer true for those referring to higher eigenvalues. In the above example, for instance, those corresponding to $\lambda_j(\beta,\Gamma),\, j=3,4,$ are given by \eqref{Nfive} with $c$ replaced by $-c^{-1}$ having thus two zeros.
}
\end{remark}

\section{A leaky $\delta'$ star}
\label{s:primestar}
\setcounter{equation}{0}

Let us now turn to our main topic, `leaky' $\delta'$ star graphs with the interaction supported by the set $\Gamma=\Gamma_N\subset \R^2$ which one can write using the polar coordinates in the plane as
\begin{equation} \label{star}
     \Gamma=\bigcup_{j=1}^N \Gamma_j, \quad \Gamma_j:=\{(r,\cos\theta_j):\, r\ge 0\},
\end{equation}
fully determined by the family of angles, $0\le \theta_1<\theta_2< \cdots < \lambda_N<2\pi$; the indices are understood modulo $N$ with $\theta_{N+1}:=\theta_1+2\pi$ and we consider only the nontrivial situations, $N\ge 2$. Alternatively, we can use that angles $\omega_j:= \theta_{j+1}-\theta_j$ (cyclically) satisfying $\sum_{j=1}^N \omega_j = 2\pi$ (our convention differs by factor two from that used in \cite{PV22}); the complement $\R^2\setminus\Gamma$ of $\Gamma$ is the union of the $N$ open sectors $\Omega_j=\{(r,\theta):\ \theta_j<\theta<\theta_{j+1}\}$. In particular, we employ the symbol $\tilde\Gamma$ for the symmetric (equiangular) star characterized by the angles $\tilde\omega_j= \frac{2\pi}{N},\, j=1,\dots,N$.

Following \cite{PV22}, we introduce the self-adjoint operator $H_{\beta,\Gamma}$ on $L^2(\R^2)$ by means of the associated quadratic form,
\begin{equation} \label{starform}
     q_{\beta,\Gamma}[f] = \int_{\R^2\setminus\Gamma} |\nabla f(x)|^2\D x - \beta^{-1} \int_\Gamma |[f](s)|^2\D s
\end{equation}
defined on $H^1(\R^2\setminus\Gamma)$, where the integrand of the second term contains the normal jump of the function over the star edges,
\begin{equation} \label{normjump}
     [f](s) := \lim_{t\to 0+} \big( f(s+tn(s)) - f(s-tn(s)) \big),
\end{equation}
where $n(s)$ is the unit normal field to $\Gamma$, defined and smooth away from the origin; for definiteness we suppose that it points from $\Omega_j$ to $\Omega_{j+1}$. By Sobolev trace theorem the map $f\mapsto [f]$ has unique bounded extension $H^1(\R^2\setminus\Gamma) \to L^2(\Gamma)$. The operator $H_{\beta,\Gamma}$ acts as $f\mapsto -\Delta f$ on functions $H^2(\R^2\setminus\Gamma)$ the Sobolev traces of which on $\Gamma$ satisfy the boundary conditions
\begin{equation} \label{delta'cond}
     \pd_n^+ f = \pd_n^- f =: \pd_n f \;\; \text{and} \;\; [f] = -\beta\pd_n f \;\; \text{with} \quad \pd_n^\pm f(s):= \lim_{t\to 0+} n(s)\cdot(\nabla f)(s\pm tn(s))\,;
\end{equation}
it can be understood as the rigorous version of the formal operator $\Delta +\beta^{-1}\delta'$.

In the repulsive case, $\beta<0$, the operator is easily seen to be positive with $\sigma(H_{\beta,\Gamma}) = \sigma_\mathrm{ess}(H_{\beta,\Gamma})=[0,\infty)$. We focus again on the attractive case and recall the basic properties of the operators $H_{\beta,\Gamma}$ derived in \cite{PV22}:
\begin{proposition} \label{prop:delta'basic}
$\sigma_\mathrm{ess}(H_{\beta,\Gamma})=[-4\beta^{-2},\infty)$ holds for any $\beta>0$. The discrete spectrum is nonempty and finite unless $N=2$ and $\theta_2-\theta_1=\pi$, i.e. unless $\Gamma$ is a straight line. With respect to the coupling parameter the spectrum is self-similar, because the operator $H_{\beta,\Gamma}$ is by scaling transformation unitarily equivalent to $\beta^{-2} H_{1,\Gamma}$.
\end{proposition}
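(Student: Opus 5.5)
The last claim is the easiest, and I would start with it. The star $\Gamma$ is invariant under dilations $x\mapsto \beta x$. Put $(V_\beta f)(x):=\beta^{-1}f(x/\beta)$, which is unitary on $L^2(\R^2)$ and maps $H^1(\R^2\setminus\Gamma)$ onto itself. The gradient term of \eqref{starform} scales as $\beta^{-2}$. The jump term gets the factor $\beta^{-2}\cdot\beta$ from the trace and arc length, and then a further $\beta^{-1}$ from the coupling. Hence $q_{\beta,\Gamma}[V_\beta f]=\beta^{-2}q_{1,\Gamma}[f]$, so $V_\beta^{-1}H_{\beta,\Gamma}V_\beta=\beta^{-2}H_{1,\Gamma}$. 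It therefore suffices to treat $\beta=1$, although I would keep $\beta$ general for readability.

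For the essential spectrum, the model is the $\delta'$ interaction on a straight line, $H_{\beta,\R}$. By separation of variables its spectrum is $[-4\beta^{-2},\infty)$. The transverse one-dimensional operator has the single eigenvalue $-4\beta^{-2}$, with the antisymmetric-type eigenfunction $\mathrm{sgn}(t)\,\ee^{-2|t|/\beta}$ (the strong-coupling value already seen in Section~\ref{s:1Dloop}).

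For the inclusion $\supset$ I would build Weyl sequences supported far out along one edge $\Gamma_j$. In coordinates $(s,t)$ adapted to $\Gamma_j$, take $u_n(s,t)=n^{-1/2}\chi\big((s-s_n)/n\big)\ee^{ips}\,\phi(t)$, where $\phi$ is the transverse bound state, $\chi$ is a smooth cut-off and $s_n\to\infty$ fast enough. Since the other edges are at distance of order $s_n$, the exponential tail of $\phi$ makes the error vanish.

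For $\inf\sigma_\mathrm{ess}\ge-4\beta^{-2}$ I would use an IMS localization $\chi_0^2+\sum_j\chi_j^2=1$. Here $\chi_0$ is supported in a ball of radius $2R$, and $\chi_j$ is supported in a conical neighbourhood of $\Gamma_j$ outside the ball of radius $R$, the cones being disjoint. The gradients of the $\chi$'s are $\OO(R^{-1})$. Each localized piece is estimated from below by $H_{\beta,\R}\ge-4\beta^{-2}$; on the pieces not meeting $\Gamma$ the form is nonnegative. The $\chi_0$ piece is relatively compact. This gives $\inf\sigma_\mathrm{ess}\ge-4\beta^{-2}-CR^{-2}$, and we let $R\to\infty$.

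For the existence of discrete spectrum I would use a variational trial function. Suppose $\Gamma$ is not a line; then some angle satisfies $\omega_j<\pi$, or $N\ge3$. On each edge I take the transverse profile $\phi(t)$ multiplied by a slowly varying longitudinal cut-off $\chi(s/L)$. Near the vertex the pieces are glued through the sectors, with signs chosen so that the jump $[f]$ across every edge has the same sign as in $\phi$. The aim is to show $q_{\beta,\Gamma}[f]+4\beta^{-2}\|f\|^2<0$. The cut-off contributes $\OO(L^{-1})$. The vertex region, where the sectors with angle $<\pi$ make the transverse profiles overlap, should contribute a fixed negative amount, as in the Exner--Ichinose argument for $\delta$ leaky curves \cite{EI01}. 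For $N$ even there is an alternative: by the chromatic two-colouring and \cite[Thm.~3.6]{BEL14}, $H_{\beta,\Gamma}$ is unitarily bounded above by the $\delta$ star of strength $4\beta^{-1}$. That operator has the same threshold $-\frac14(4/\beta)^2=-4\beta^{-2}$, and it is known to have discrete spectrum for non-straight stars, which settles that case immediately. For the straight line, separation of variables shows there is no discrete spectrum.

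Finiteness is the step I expect to be the main obstacle, because the $\delta'$ form is not positivity preserving (Remark~\ref{rem:nopreserve}), so comparison arguments valid for $\delta$ stars do not transfer directly. I would argue by Dirichlet--Neumann bracketing. Outside a large ball I split into strips around the edges and conical remainders. In each strip the operator decomposes into a transverse part, a longitudinal one-dimensional part and an exponentially small coupling to the neighbouring edges. The effective longitudinal potential is then bounded below by $-C\ee^{-cs}$, which is integrable against $s$ and so, by a Bargmann-type bound, produces finitely many eigenvalues below $-4\beta^{-2}$. The interior piece, a Neumann problem on a bounded domain, contributes finitely many as well. The delicate point is controlling the Neumann bracketing surfaces crossing $\Gamma$ without creating new infinitely many states; this is where the detailed analysis of \cite{PV22} is needed.
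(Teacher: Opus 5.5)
The paper gives no proof of this proposition; it is quoted from \cite{PV22}. Your scaling computation is correct. The Weyl-sequence/IMS argument for $\sigma_{\mathrm{ess}}=[-4\beta^{-2},\infty)$ is standard and fine; the Weyl functions also need a transverse cut-off, since $\phi$ violates the $\delta'$ condition wherever it meets another edge. The even-$N$ existence argument via \cite{BEL14} is fine too. There are, however, two genuine gaps: existence of discrete spectrum for odd $N$, and finiteness.

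\emph{Existence for odd $N$.} The claim that the vertex region ``should contribute a fixed negative amount'' is exactly what needs proof, and the analogy with \cite{EI01} breaks down. If ``the same sign'' refers to the orientation in \eqref{normjump}, your prescription forces $f$ to be positive near $\Gamma_j$ and negative near $\Gamma_{j+1}$ inside every sector $\Omega_j$. The two profiles then interfere destructively near the vertex and cost gradient energy. Already for a straight line this is the wrong pattern: the threshold profile has the same orientation on both halves. If instead you mean that $f$ has constant sign in each sector and flips across every edge, that is a two-colouring, which does not exist for odd $N$.

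The missing idea is monotonicity in the support, which the paper itself uses in Proposition~\ref{prop:nominimum}. If $\Gamma'\subset\Gamma$, then $H^1(\R^2\setminus\Gamma')\subset H^1(\R^2\setminus\Gamma)$ and the two forms coincide there, so $\inf\sigma(H_{\beta,\Gamma})\le\inf\sigma(H_{\beta,\Gamma'})$. Every non-straight star contains a non-straight broken line $\Gamma'$: for $N\ge3$, $\Gamma_2$ and $\Gamma_3$ cannot both be antipodal to $\Gamma_1$. Take $f=\pm u$ on the two sectors of $\Gamma'$ with $u\in H^1(\R^2)$. This gives $q_{\beta,\Gamma'}[f]=q^\delta_{4/\beta,\Gamma'}[u]$, whose infimum lies below $-4\beta^{-2}$ by \cite{EI01}. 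Since the threshold of $H_{\beta,\Gamma}$ is the same, a discrete eigenvalue exists for every $N$.

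\emph{Finiteness.} Here you do not have a proof: the key step is deferred to \cite{PV22}. Moreover, the bracketing as sketched fails with fixed-width strips. The transverse $\delta'$ operator on $(-a,a)$ with Neumann ends has ground state $-\kappa^2$ with $\kappa\tanh\kappa a=2/\beta$, i.e.\ strictly below $-4\beta^{-2}$. The bracketed operator on a half-infinite strip therefore has essential spectrum starting below the threshold, and the lower bound gives nothing. Widening the strips destroys the separation of variables your Bargmann-type step relies on.

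A simple way out is the pointwise bound $|f_+-f_-|^2\le2|f_+|^2+2|f_-|^2$. It gives
\[
q_{\beta,\Gamma}[f]\ge\sum_{j=1}^N\Big(\|\nabla f_j\|^2_{L^2(\Omega_j)}-\frac{2}{\beta}\|f_j\|^2_{L^2(\partial\Omega_j)}\Big)
\]
on the common form domain $\bigoplus_j H^1(\Omega_j)$. Thus $H_{\beta,\Gamma}$ is bounded below by the direct sum of Robin Laplacians on the sectors with parameter $2\beta^{-1}$. By Khalile and Pankrashkin's result on Robin Laplacians in infinite sectors, each of these has essential spectrum $[-4\beta^{-2},\infty)$ and only finitely many eigenvalues below it. The min-max principle then yields finiteness, and also the lower bound on $\sigma_{\mathrm{ess}}$ without IMS.
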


In analogy with Proposition~\ref{1nomin} it makes no sense to ask for a minimizer.
\begin{proposition} \label{prop:nominimum}
For fixed $\beta>0,\, N\ge 2$, and any $c<0$ there is a star $\Gamma$ such that $\inf\sigma(H_{\beta,\Gamma})<c$.
\end{proposition}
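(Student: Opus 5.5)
We will mimic the proof of Proposition~\ref{1nomin}: a star with a very narrow sector should bind arbitrarily strongly, because a function confined to that sector jumps across both of its edges. By the minimax principle it suffices to exhibit, for every $c<0$, a trial function $f\in H^1(\R^2\setminus\Gamma)$ with $q_{\beta,\Gamma}[f]<c\|f\|^2$.

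\emph{Reduction.} Taking $f$ supported in the closure of one sector does not use the other edges. Adding further edges outside that sector changes neither the Dirichlet integral nor the jump term, since $f$ vanishes on both sides of them. Hence it suffices to treat a star containing two neighbouring edges at a small angle $\omega=\omega_1$; for $N\ge3$ the remaining edges are placed arbitrarily in the complementary sector.

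\emph{Trial function.} In the sector $\Omega_1=\{\theta_1<\theta<\theta_1+\omega\}$ we take, in polar coordinates,
$$
f(r,\theta)=\chi(r)\cosh\!\big(\kappa r(\theta-\theta_1-\tfrac{\omega}{2})\big),
$$
and $f=0$ elsewhere. Here $\chi$ is a smooth cutoff supported in $[R,2R]$ with $|\chi'|\lesssim R^{-1}$. This is the one-dimensional profile of Proposition~\ref{1nomin} applied to the transversal coordinate $u=r(\theta-\theta_1-\frac{\omega}{2})\in(-\frac{r\omega}{2},\frac{r\omega}{2})$.

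A simpler choice is possible: $f=\chi(r)$ on $\Omega_1$ and $0$ outside. Then $\|f\|^2\approx\omega\int\chi^2 r\,\D r$. The jump term equals $2\beta^{-1}\int\chi^2\,\D r$ coming from the two edges. The gradient term is $\omega\int\chi'^2 r\,\D r\lesssim\omega$ for $R\approx1$. The Rayleigh quotient is therefore
$$
\frac{\omega\int\chi'^2r\,\D r-2\beta^{-1}\int\chi^2\,\D r}{\omega\int\chi^2 r\,\D r}\;\approx\; C-\frac{2}{\beta\omega}\,\frac{\int\chi^2\,\D r}{\int\chi^2 r\,\D r},
$$
which tends to $-\infty$ as $\omega\to0$ with $\chi$ fixed, say supported in $[1,2]$. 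This piecewise-constant function lies in $H^1(\R^2\setminus\Gamma)$ because its discontinuities sit only on $\Gamma$. It is the exact analogue of the step functions used in Proposition~\ref{1disc}, and it avoids the cosh profile entirely.

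\emph{Main obstacle.} The only genuine point is that the edges must remain distinct and the angles must still sum to $2\pi$. This is automatic when $\omega\to0$ with the other angles adjusted. For $N=2$, $\omega_2=2\pi-\omega$ is forced. One also has to check that the trace/jump term is computed correctly on each edge with arc length $\D s=\D r$. Having $q[f]/\|f\|^2<c$, the minimax principle yields $\inf\sigma(H_{\beta,\Gamma})<c$.
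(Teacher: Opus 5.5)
Your argument is correct, and it takes a different route from the paper.

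\textbf{The paper's route.} It first reduces to $N=2$ using the monotonicity $\inf\sigma(H_{\beta,\hat\Gamma})\le\inf\sigma(H_{\beta,\Gamma})$ for $\hat\Gamma\supset\Gamma$. It then invokes the Pankrashkin--Vogel asymptotics, by which the ground state of the broken line diverges to $-\infty$ like $\phi^{-2}$ as the opening angle $\phi\to0+$.

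\textbf{Your route.} You give a self-contained variational proof. The function equal to $\chi(r)$ on the thin sector and zero elsewhere lies in $H^1(\R^2\setminus\Gamma)=\bigoplus_j H^1(\Omega_j)$. Your ``$\approx$'' are in fact equalities:
\[
\|f\|^2=\omega\int\chi^2r\,\D r,\qquad \|\nabla f\|^2=\omega\int\chi'^2r\,\D r,\qquad \int_\Gamma|[f]|^2\,\D s=2\int\chi^2\,\D r.
\]
With $\mathrm{supp}\,\chi\subset[1,2]$ the Rayleigh quotient is therefore at most $C_\chi-(\beta\omega)^{-1}$. Your remark that the trial function does not see the remaining edges is the same mechanism as the paper's monotonicity in $\Gamma$, applied at the level of the trial function. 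So you need no separate reduction to $N=2$.

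\textbf{What each buys.} The paper's route gives the sharp rate, but only by importing a nontrivial external theorem. Yours is elementary. With a fixed cutoff it yields only order $\omega^{-1}$, which already suffices for the claim. If you rescale $\chi(r)=\chi_1(r/R)$, the quotient becomes $A R^{-2}-2B(\beta\omega R)^{-1}$ with constants $A,B>0$ depending only on $\chi_1$. Choosing $R\sim\beta\omega$ then recovers the correct order $\omega^{-2}$, though not the sharp constant, without any external input. The initial $\cosh$ profile is unnecessary and can be dropped.
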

\begin{proof}
It is clear from \eqref{starform} and the minimax principle that $\inf\sigma(H_{\beta,\hat\Gamma}) \le \inf\sigma(H_{\beta,\Gamma})$ holds if $\hat\Gamma \supset \Gamma$, hence it is enough to consider the case $N=2$ when $\Gamma$ is a broken line, two halflines meeting at a angle $\phi \in(0,\pi)$. By \cite[Thm.~2.1b]{PV22} the lowest eigenvalue of such an $H_{\beta,\Gamma}$ behaves asymptotically as
$$
\lambda_1(\beta,\Gamma) = 4\beta^{-2}\phi^{-2} + \OO(\phi^{-1})
$$
for $\phi\to 0+$, hence it can be made arbitrarily large negative what we set out to prove.
\end{proof}

If we look for a maximizing geometry, the parity of the vertex degree plays again role, the case of an even $N$ being easier:
\begin{theorem} \label{thm:Neven}
For even $N\ge 4$ and $\beta>0$, $\,\lambda_1(\beta,\Gamma)$ is uniquely maximized by $\tilde\Gamma$.
\end{theorem}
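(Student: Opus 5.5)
We follow the route of Corollary~\ref{cor:1even}, now in the plane, comparing the $\delta'$ star with the $\delta$ star of coupling $4\beta^{-1}$. For even $N$ the sectors $\Omega_1,\dots,\Omega_N$ can be two-coloured, since neighbouring sectors alternate. Set $(Uf)(x)=(-1)^j f(x)$ for $x\in\Omega_j$; this map is unitary on $L^2(\R^2)$. The argument of Proposition~\ref{prop:1bound}, which is the planar version of \cite[Thm.~3.6]{BEL14}, then applies verbatim. For $f\in H^1(\R^2)$, $Uf$ lies in $H^1(\R^2\setminus\Gamma)$ and $|[Uf]|^2=4|f|^2$ on $\Gamma$. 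Hence $q_{\beta,\Gamma}[Uf]=\|\nabla f\|^2-4\beta^{-1}\int_\Gamma|f|^2\,\D s = q^\delta_{4/\beta,\Gamma}[f]$. Since $H^1(\R^2)\subset H^1(\R^2\setminus\Gamma)$, minimax gives $\lambda_1(\beta,\Gamma)\le\lambda^\delta_1(4\beta^{-1},\Gamma)$ for every star $\Gamma$ with $N$ even. This bound also holds for $\Gamma=\tilde\Gamma$ and for any bottom of the spectrum, including $\inf\sigma_\mathrm{ess}$: both essential spectra equal $[-4\beta^{-2},\infty)$, because $\frac14(4/\beta)^2=4\beta^{-2}$.

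Next we invoke the $\delta$ result of \cite{EL18}: for an attractive $\delta$ star, $\lambda^\delta_1(\alpha,\Gamma)\le\lambda^\delta_1(\alpha,\tilde\Gamma)$, with equality only if $\Gamma$ is congruent to $\tilde\Gamma$. This gives $\lambda_1(\beta,\Gamma)\le\lambda^\delta_1(4\beta^{-1},\tilde\Gamma)$ with strict inequality for non-symmetric $\Gamma$. Here $N\ge4$ is needed: for $N=2$ the symmetric star is the straight line, which has no discrete spectrum.

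It remains to prove $\lambda_1(\beta,\tilde\Gamma)=\lambda^\delta_1(4\beta^{-1},\tilde\Gamma)$, which is the key step. Let $f$ be the positive, rotationally $\frac{2\pi}{N}$-symmetric and reflection-symmetric ground state of $H^\delta_{4/\beta,\tilde\Gamma}$. Then $Uf$ satisfies $-\Delta(Uf)=\lambda Uf$ in each sector, its normal derivative is continuous across each edge by the reflection symmetry, and $[Uf]=-\beta\pd_n(Uf)$. So $Uf$ is an eigenfunction of $H_{\beta,\tilde\Gamma}$ and $\lambda_1(\beta,\tilde\Gamma)\le\lambda^\delta_1$. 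For the reverse inequality, suppose $H_{\beta,\tilde\Gamma}$ had an eigenvalue below $\lambda^\delta_1$. Decompose the eigenfunction with respect to the reflection symmetries across the edge lines. In the components where $U^{-1}$ produces a function continuous across all edges, $U^{-1}g$ is an eigenfunction of the $\delta$ operator below its ground state, a contradiction. The remaining components have jumps that do not alternate, and we have to show they cannot lie below $\lambda^\delta_1$. Our first attempt at this is as follows. Such a component is antisymmetric with respect to some edge, so it has $\pd_n g=0$ there. Restricting the form to a fundamental domain with Neumann-type conditions then yields a quantity that we compare with the $\delta$ form on $\tilde\Gamma$. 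Alternatively, the inequality can be avoided altogether: the chain $\lambda_1(\beta,\Gamma)\le\lambda^\delta_1(4/\beta,\tilde\Gamma)=\lambda_1(\beta,\tilde\Gamma)$ only requires that $\lambda^\delta_1(4/\beta,\tilde\Gamma)$ is the ground state of $H_{\beta,\tilde\Gamma}$, i.e.\ an inequality $\lambda_1(\beta,\tilde\Gamma)\ge\lambda^\delta_1(4/\beta,\tilde\Gamma)$. For this we would try a symmetry reduction to a single sector $\Omega_j$ with $\frac{2\pi}{N}$-periodic Bloch-type angular conditions, on which the $\delta'$ and $\delta$ forms are comparable as in the loop case of Corollary~\ref{cor:1even}. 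This reverse inequality is the main obstacle.

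Uniqueness then follows from the strict inequality in the $\delta$ star result of \cite{EL18}.
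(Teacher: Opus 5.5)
Your first three steps are the paper's own: the two-colouring unitary $U$ giving $\lambda_1(\beta,\Gamma)\le\lambda^\delta_1(4\beta^{-1},\Gamma)$, the $\delta$-star optimization, and the check that $U\phi_{\max}$ is an eigenfunction of $H_{\beta,\tilde\Gamma}$ with eigenvalue $\lambda^\delta_1(4\beta^{-1},\tilde\Gamma)$. One correction: \cite{EL18} gives only the non-strict inequality. The strictness you need for uniqueness is supplied in the paper by Proposition~\ref{prop:uniqdelta}.

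The genuine gap is the step you flag yourself, $\lambda_1(\beta,\tilde\Gamma)\ge\lambda^\delta_1(4\beta^{-1},\tilde\Gamma)$, and it cannot be bypassed. Without it the chain only bounds every $\lambda_1(\beta,\Gamma)$ by a number that might a priori lie strictly above $\lambda_1(\beta,\tilde\Gamma)$. Then neither maximality nor uniqueness of $\tilde\Gamma$ follows. Your ``alternative'' is not one: it restates the same inequality. The paper settles this step by the symmetric/antisymmetric reduction of Corollary~\ref{cor:1even}, which your first attempt gestures at. However, you leave open exactly the part any version of that argument has to handle. A component that is not sign-alternating across all edges can in general be made reflection-symmetric (hence jump-free with $\pd_n g=0$) across a single edge line only, because for $N\ge6$ the reflections across different edge lines do not commute. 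Such a component still feels the $\delta'$ coupling on the remaining edges. ``Restricting to a fundamental domain and comparing'' gives no lower bound on its energy.

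Your Bloch-type idea does close the gap, and cheaply. Put $\omega=2\pi/N$, let $f_j$ be $f|_{\Omega_j}$ rotated back to $\Omega_0=\{0<\theta<\omega\}$, and let $\hat f_k$ be the discrete Fourier transform in $j$. Then $q_{\beta,\tilde\Gamma}[f]=\sum_{k=0}^{N-1}q_k[\hat f_k]$, where $q_k[g]=\|\nabla g\|^2_{\Omega_0}-\beta^{-1}\int_0^\infty|g(r,0)-\ee^{-2\pi ik/N}g(r,\omega)|^2\,\D r$.

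\begin{itemize}
\item Since $|a-\ee^{-i\vartheta}b|\le|a|+|b|$ and $\|\nabla|g|\|\le\|\nabla g\|$, one has $q_k[g]\ge q_{N/2}[|g|]$ for every $k$.
\item Split $g=g_++g_-$ into parts even and odd with respect to the bisector of $\Omega_0$. The gradient norms add up, and $g(r,0)+g(r,\omega)=2g_+(r,0)$.
\item Since $g_+(r,0)=g_+(r,\omega)$, continuing $g_+$ by rotations gives $F\in H^1(\R^2)$ with $\|F\|^2=N\|g_+\|^2$ and $q^\delta_{4/\beta,\tilde\Gamma}[F]=N\big(\|\nabla g_+\|^2-4\beta^{-1}\int_0^\infty|g_+(r,0)|^2\,\D r\big)$.
\item Hence $q_{N/2}[g]\ge\|\nabla g_-\|^2+\lambda^\delta_1\|g_+\|^2\ge\lambda^\delta_1\|g\|^2$, using $\lambda^\delta_1=\lambda^\delta_1(4\beta^{-1},\tilde\Gamma)<0$.
\end{itemize}

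Summing over $k$ yields $q_{\beta,\tilde\Gamma}[f]\ge\lambda^\delta_1(4\beta^{-1},\tilde\Gamma)\|f\|^2$, which is exactly the missing inequality. Alternatively, the decomposition $Q_1\simeq\bigoplus_kQ_1^{(k)}$ with $m_k\le m_{N/2}$ from the proof of Theorem~\ref{thm:main} does the same job. It shows that the top of the Birman--Schwinger spectrum lies in the alternating channel, which $U$ maps to continuous functions.
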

\begin{proof}
It makes no sense to consider the case $N=2$, because $\lambda_1(\beta,\Gamma)$ is by \cite[Thm.~2]{PV22} monotonous in the angle and disappears as $\Gamma$ approaches $\tilde\Gamma$ which is here a straight line. As in Corollary~\ref{cor:1even}, we use a comparison with the corresponding $\delta$ leaky graph with the Hamiltonian $H^\delta_{\alpha,\Gamma}$ on $L^2(\R^2)$ associated with the quadratic form,
$$ 
     q^\delta_{\alpha,\Gamma}[f] = \int_{\R^2} |\nabla f(x)|^2\D x - \alpha \int_\Gamma |f(s)|^2\D s
$$ 
defined on $H^1(\R^2)$. The chromatic number of $\Gamma$ is two by assumption, hence as long as $\beta\le \frac{4}{\alpha}$, there is a unitary operator $U$ such that $U^{-1} H_{\beta,\Gamma}U \le H^\delta_{\alpha,\Gamma}$. We choose $\alpha=\frac{4}{\beta}$ and use Theorem~1.1 of \cite{EL18} by which the ground state of $H^\delta_{4\beta^{-1},\Gamma}$ is maximized by that of $H^\delta_{4\beta^{-1},\tilde\Gamma}$, uniquely so, cf.~Proposition~\ref{prop:uniqdelta} below. Combining the two results, we arrive at the inequality
$$
\lambda_1(\beta,\Gamma) \le \lambda^\delta_1(4\beta^{-1},\tilde\Gamma).
$$
Denote by $\phi_\mathrm{max}$ the corresponding positive eigenfunction of $H^\delta_{4\beta^{-1},\tilde\Gamma}$. Since $N$ is even, $\tilde\Gamma_j$ and $\tilde\Gamma_{j+N/2}$ form a line for any $j$, and $\phi_\mathrm{max}$ is symmetric with respect to it, which in particular means that the normal derivative of $\phi_\mathrm{max}$ switches sign at the points of $\Gamma_j$.

The unitary operator $U$ we are using is known explicitly: denoting by $\Sigma_j$ the segment of the plane determined by the halflines $\tilde\Gamma_j$ and $\tilde\Gamma_{j+1}$, it acts as $(U\phi)(x) = (-1)^j \phi(x)$. Consequently, the function $U\phi_\mathrm{max}$ solves the equation $-\Delta\phi= \lambda^\delta_1(4\beta^{-1},\tilde\Gamma)\phi$ in $\R^2\setminus\tilde\Gamma$ and its normal derivative is continuous at each $\tilde\Gamma_j$, while its value has a jump there; denoting by $U\phi_\mathrm{max}(x\pm)$ the one-sided normal limits of this function at a point $x\in\tilde\Gamma_j$, we have
\begin{align*}
U\phi_\mathrm{max}(x+) - U\phi_\mathrm{max}(x-) & \: = 2\phi_\mathrm{max}(x) = -\frac{\beta}{2}\big(\phi'_\mathrm{max}(x+) - \phi'_\mathrm{max}(x-) \\ & \: = -\beta\phi'_\mathrm{max}(x) = -\beta U\phi'_\mathrm{max}(x),
\end{align*}
so that $U\phi_\mathrm{max}$ is an eigenfunction of $H_{\beta,\tilde\Gamma}$ corresponding to the eigenvalue $\lambda^\delta_1(4\beta^{-1},\tilde\Gamma)$, and repeating the argument from the proof of Corollary~\ref{cor:1even}, we conclude that it is nothing but the ground state of the operator.
\end{proof}

The uniqueness of the maximizer for the comparison operator $H^\delta_{\alpha,\Gamma}$ used in the proof is missing in \cite{EL18} but it can be easily established:
\begin{proposition} \label{prop:uniqdelta}
$\inf\sigma(H^\delta_{\alpha,\Gamma})$ is maximized by $\inf\sigma(H^\delta_{\alpha,\tilde\Gamma})$ uniquely.
\end{proposition}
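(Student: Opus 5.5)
We need to show that if $\Gamma$ is not congruent with $\tilde\Gamma$, then $\inf\sigma(H^\delta_{\alpha,\Gamma})<\inf\sigma(H^\delta_{\alpha,\tilde\Gamma})$ strictly. The plan is to go back to the Birman--Schwinger argument of \cite{EL18} and follow where its inequalities are strict.

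\textbf{Step 1: Birman--Schwinger setup.} Recall the mechanism of \cite{EL18}. Write $\lambda_1^\delta(\alpha,\Gamma)=-\kappa^2$; by \cite{EL18} it lies below $-\frac14\alpha^2$, the threshold of the essential spectrum. Then $\kappa$ is characterized by the condition that the Birman--Schwinger operator
$$
R^\kappa_{\Gamma}:\ (R^\kappa_\Gamma\phi)(s)=\frac{\alpha}{2\pi}\int_\Gamma K_0(\kappa|x(s)-x(s')|)\,\phi(s')\,\D s'
$$
on $L^2(\Gamma)$ has supremum of the spectrum equal to one. Monotonicity of $\kappa\mapsto R^\kappa_\Gamma$ (strictly decreasing in the operator sense) reduces the claim to the following: for $\tilde\kappa$ belonging to $\tilde\Gamma$, one has $\sup\sigma(R^{\tilde\kappa}_\Gamma)>1$ whenever $\Gamma\not\cong\tilde\Gamma$.

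\textbf{Step 2: test function and strict inequality.} Take the positive eigenfunction $\tilde\phi$ of $R^{\tilde\kappa}_{\tilde\Gamma}$. By symmetry it has the same profile $\tilde\phi(r)$ on each edge. Transplant it to $\Gamma$ edgewise, which preserves the $L^2(\Gamma)$ norm. The quadratic form then splits into diagonal terms, which are identical for both stars, and cross terms depending only on the angles $|\theta_j-\theta_k|$ through
$$
F(\vartheta)=\int\!\!\int \tilde\phi(r)\tilde\phi(r')K_0\big(\tilde\kappa\sqrt{r^2+r'^2-2rr'\cos\vartheta}\big)\,\D r\,\D r'.
$$
In \cite{EL18} it is shown that $F$, viewed as a function of the chord-type variable (or of the angle), is strictly convex in the relevant sense. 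A Jensen / Fejes T\'oth-type rearrangement then shows that $\sum_{j\ne k}F(\theta_k-\theta_j)$ is minimized exactly at equal spacing. What remains is to check that the inequality there is \emph{strict} unless all $\omega_j$ equal $2\pi/N$. Strict convexity of $K_0$ together with positivity of $\tilde\phi$ gives strict convexity of $F$, and Jensen with a strictly convex function is saturated only for equal arguments. This yields
$$
\langle\tilde\phi,R^{\tilde\kappa}_\Gamma\tilde\phi\rangle>\langle\tilde\phi,R^{\tilde\kappa}_{\tilde\Gamma}\tilde\phi\rangle=\|\tilde\phi\|^2,
$$
so $\sup\sigma(R^{\tilde\kappa}_\Gamma)>1$. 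Since $R^\kappa_\Gamma$ decreases as $\kappa$ grows, this forces $\kappa(\Gamma)>\tilde\kappa$, that is, the strict inequality for the eigenvalues.

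\textbf{Main obstacle.} The delicate point is the strict-convexity step. In \cite{EL18} the convexity is obtained after rewriting $F$ as a function of a suitable variable, using a decomposition of the sum over pairs into ``$m$-neighbour'' terms, each treated by Jensen. We must make sure that at least one of these Jensen inequalities is strict whenever the angles are unequal. I would argue this via the nearest-neighbour term ($m=1$): its Jensen inequality involves exactly the gaps $\omega_j$, which are not all equal, so strictness follows from strict convexity. I would try that first, and only if the convexity in \cite{EL18} is merely non-strict, fall back on analyticity of $K_0$ to exclude affine pieces.
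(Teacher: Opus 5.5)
Your route is the paper's: the Birman--Schwinger operator, with the positive edge-symmetric ground state $\tilde\phi$ of $\tilde\Gamma$ transplanted edgewise as a trial function for $\Gamma$. The diagonal $j=k$ terms cancel, and the proof finishes with Jensen in the angles and strictness from strict convexity. Your organization of the Jensen step is cleaner than the paper's displayed chain, which averages the distances $d_{jk}$ rather than the angles. You apply Jensen for each $m$ to the angles $\theta_{j+m}-\theta_j\in(0,2\pi)$, whose sum is $2\pi m$, and take strictness from $m=1$, where the arguments are the gaps $\omega_j$.

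The gap is the sentence ``strict convexity of $K_0$ together with positivity of $\tilde\phi$ gives strict convexity of $F$''. Convexity of the decreasing function $K_0$ in the distance would carry over to the angle if the distance were concave in the angle. But $\vartheta\mapsto\sqrt{r^2+r'^2-2rr'\cos\vartheta}$ is not concave when $r\neq r'$, and the transfer genuinely fails. For $\Phi_{r,r'}(\vartheta):=K_0\big(\kappa\sqrt{r^2+r'^2-2rr'\cos\vartheta}\big)$ with $r\neq r'$ one has $\Phi_{r,r'}'(0)=0$ and
\[
\Phi_{r,r'}''(0)=-\kappa K_1(\kappa|r-r'|)\,\frac{rr'}{|r-r'|}<0 ,
\]
so the kernel is strictly \emph{concave} at small angles. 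Passing to the chord variable $\ell=2\sin\frac{\vartheta}{2}$ does not help: the kernel becomes $K_0\big(\kappa\sqrt{(r-r')^2+rr'\ell^2}\big)$, which is again concave near $\ell=0$. Positivity of $\tilde\phi$ cannot produce a convexity the integrand lacks. Your analyticity fallback only rules out affine pieces, not concavity, so it does not help either. The paper's sketch asserts exactly this pointwise convexity ``as in \cite{EL18}'', so it does not settle the point either.

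What the argument actually needs is strict convexity on $(0,2\pi)$ of the integrated function $F$ for the specific $\tilde\phi$. Near $\vartheta=0$ any such convexity can only come from the logarithmic singularity of the kernel on the diagonal $r=r'$, which has to beat the concave off-diagonal contributions. For orientation, $\int_\R K_0(\kappa\sqrt{x^2+y^2})\,\D x=\frac{\pi}{\kappa}\ee^{-\kappa|y|}$ is convex in $|y|$. One possible starting point is the Graf expansion $F(\vartheta)=\sum_{n\in\Z}a_n\cos n\vartheta$ with $a_n=\iint\tilde\phi(r)\tilde\phi(r')I_{|n|}(\kappa r_<)K_{|n|}(\kappa r_>)\,\D r\,\D r'$. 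Without such an estimate on $F$ itself, the Jensen step is unproved, and with it both the inequality and its strictness.
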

\begin{proof}
The result follows by Birman-Schwinger principle as in \cite{EL18} where stars of finite edges length were considered and the present $\Gamma$ appears only as a limit; we focus on the differences using the notation of \cite[Example~10.2.1]{EK15}. Since $H^\delta_{\alpha,\Gamma}$ is unitarily equivalent to $\alpha^2 H^\delta_{-1,\Gamma}$ we can without loss of generality consider the last named operator and its Birman-Schwinger counterpart, an integral operator on $\HH=\bigoplus_{j=1}^N L^2(\R_+)$ acting  as
$$
(Q^\kappa\psi)_i(s) = \sum_{j=1}^N \frac{1}{2\pi} \int_0^\infty K_0\big(\kappa d_{ij}(s,s')\big)\, \psi_j(s')\,\D s',
$$
where $d_{ij}(s,s') = \sqrt{s^2+s'^{2} - 2ss'\cos\phi_{ij}}$ with $\phi_{ij}=\theta_i - \theta_j= \sum_{k=ji}^{i-1} \omega_k$ and $K_0$ is the Macdonald function; the largest eigenvalue of $Q^\kappa$ is simple and the value of $\kappa$ for which $Q^\kappa\psi_\mathrm{g}=\psi_\mathrm{g}$ determines the ground state energy of $H^\delta_{\alpha,\Gamma}$ as $-\kappa^2$. As in \cite[Prop.~3.1]{EL18}, the operator $Q^\kappa$ is self-adjoint and positive for each $\kappa>0$, however, it is no longer compact for semiinfinite edges; mimicking the reasoning of \cite[Sec.~5]{EI01} one can check that it has a continuous spectrum in $[0,\frac12\kappa]$ and a nonempty discrete one in $(\frac12\kappa,\infty)$ with the eigenvalues monotonously decreasing towards zero as $\kappa\to\infty$.

Putting $\HH_\mathrm{sym}= \{\psi\oplus \cdots\oplus\psi:\, \psi\in L^2(\R_+)\}\subset\HH$ one finds easily that $Q^\kappa$ is reduced by this subspace and the eigenfunction $\psi_\mathrm{g}$ belongs to $\HH_\mathrm{sym}$, as does the eigenfunction $\tilde\psi$ of $\tilde{Q}^\kappa$. As in \cite{EL18} one checks that $K_0\big(\kappa d_{ij}(s,s')$ is strictly decreasing and strictly convex as a function of the angle $\phi_{ij}$ so using Jensen's inequality we get
\begin{align*}
\sup\sigma(Q^\kappa) & = \frac{1}{2\pi} \int_0^\infty \int_0^\infty \sum_{j=1}^N \sum_{k=1}^N K_0\big(\kappa d_{jk}(s,s')\big)\,\psi_\mathrm{g}(s)\psi_\mathrm{g}(s')\,\D s\D s' \\
& \ge \frac{1}{2\pi} \int_0^\infty \int_0^\infty \sum_{j=1}^N \sum_{k=1}^N K_0\big(\kappa d_{jk}(s,s')\big)\,\tilde\psi(s)\tilde\psi(s')\,\D s\D s' \\
& =\frac{1}{2\pi} \sum_{j=1}^N \int_0^\infty \int_0^\infty \Big(\sum_{k=1}^N K_0\big(\kappa d_{jk}(s,s')\big)\Big)\,\tilde\psi(s)\tilde\psi(s')\,\D s\D s' \\
& > \frac{N}{2\pi} \sum_{j=1}^N \int_0^\infty \int_0^\infty \sum_{k=1}^N K_0\Big(\kappa \Big(\sum_{k=1}^N \frac{d_{jk}(s,s')}{N}\Big)\Big)\,\tilde\psi(s)\tilde\psi(s')\,\D s\D s' \\
& = \frac{1}{2\pi} \sum_{j=1}^N \int_0^\infty \int_0^\infty \Big( \sum_{k=1}^N K_0\big(\kappa \tilde{d}_{jk}(s,s')\big)\Big)\,\tilde\psi(s)\tilde\psi(s')\,\D s\D s' \\
& = \frac{1}{2\pi} \int_0^\infty \int_0^\infty \Big(\sum_{j=1}^N \sum_{k=1}^N K_0\big(\kappa \tilde{d}_{jk}(s,s')\big)\Big)\,\tilde\psi(s)\tilde\psi(s')\,\D s\D s' = \sup\sigma(\tilde{Q}^\kappa)
\end{align*}
unless $\tilde\Gamma$ is congruent with $\Gamma$ which is what we have set out to prove.
\end{proof}

\section{Birman-Schwinger approach to $\delta'$ stars}
\label{s:primestarBS}
\setcounter{equation}{0}

As in Sec.~\ref{s:1Dloop}, there is another approach which is not restricted to even values of $N$; without loss of generality, of course, we may restrict our attention to $N\ge 3$. Recall that the expression for the resolvent of singular Schr\"odinger operators with $\delta'$ interaction supported by curves, and more generally hypersurfaces of codimension one, together with the corresponding Birman-Schwinger principle, was derived in \cite{BLL13}; here we will write it in the form suitable for our purposes.

Consider first a wedge-shape sector $\Omega$ of an opening angle $\omega$ and the outer normal $\nu$. For a given $\kappa>0$ the standard Neumann-to-Dirichlet map of the operator $-\Delta+\kappa^2$ is $\LL_\Omega(\kappa):\,L^2(\partial\Omega)\to L^2(\partial\Omega)$ acting as $\LL_\Omega(\kappa)\psi=u|_{\partial\Omega}$, where $\partial_n u=\psi$; it is bounded, self-adjoint, and it satisfies $\langle\Lambda_\Omega\psi,\psi\rangle=\|\nabla u\|^2_{\Omega}+\kappa^2\|u\|^2_\Omega\ge0$ and $(\LL_\Omega\psi,\psi)=\|\nabla u\|^2_{\Omega}+\kappa^2\|u\|^2_\Omega\ge0$. In the same spirit, we introduce the \emph{Neumann-to-jump operator} on $L^2(\Gamma)=\bigoplus_j L^2(\R_+)$ by $u=\NN_\kappa\phi$ for given $\phi=(\phi_1,\dots,\phi_N)\in L^2(\Gamma)$, where $u$ is the unique solution to the equation
\begin{equation} \label{sectneumann}
(-\Delta+\kappa^2)u=0 \;\;\; \text{ for }\; u\in H^2(\R^2\setminus\Gamma),\quad
  \partial_{\nu_j}^+u=\partial_{\nu_j}^-u-=\phi_j\ \text{ on }\Gamma_j.
\end{equation}
In view of the anticlockwise orientation of the normal field \eqref{normjump}, we have a one-sided sign difference with respect to the outer normals $\nu_j$, so that \eqref{sectneumann} is nothing but the Neumann problem on each sector. The operator
\begin{equation} \label{starBSop}
Q_\kappa:\,L^2(\Gamma)\to L^2(\Gamma), \quad Q_\kappa = -\NN_\kappa,
\end{equation}
is bounded and positive, satisfying $(Q_\kappa\phi,\phi) =\|\nabla\NN_\kappa\phi\|^2_{L^2(\R^2\setminus\Gamma)}+\kappa^2\|\NN_\kappa\phi\|^2$. In a similar way one can introduce $Q_{\sqrt{-z}}$ for any $z\in\C\setminus\R_+$.
\begin{proposition} \label{prop:BSprimestar}
Given $\beta>0$ and a complex $z\not\in \sigma(H_{\beta,\Gamma}) \cap \C\setminus[0,\infty)$, one has
\begin{enumerate}
\item the Krein-type resolvent formula,
$$
(H_{\beta,\Gamma}-z)^{-1}
=R_0(z)+\beta\,\dD_z\,Q_{\sqrt{-z}}(\beta-Q_{\sqrt{-z}})^{-1}\dD_{\bar z}^{*}, \quad
\dD_{\bar z}^{*}h=\partial_\nu R_0(z)h\big|_\Gamma,
$$
where $R_0(z)+(H^{\mathrm{N}}_{\Gamma}-z)^{-1}$ with $H^{\mathrm{N}}_{\Gamma}$ being the Neumann Laplacian in $L^2(\R^2\setminus\Gamma)$,
\item $z\in\sigma_\mathrm{disc}(H_{\beta,\Gamma})$ holds if and only if $\beta\in\sigma_p(Q_{\sqrt{-z}})$ and the respective multiplicities coincide,
$\dim\ker(H_{\beta,\Gamma}-z)=\dim\ker(Q_{\sqrt{-z}}-\beta)$; the corresponding eigenfunctions are related by $\phi\mapsto f=\NN_z\phi$ and $f\mapsto\phi=\partial_\nu f$.
\end{enumerate}
\end{proposition}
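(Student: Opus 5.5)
The plan is the standard boundary-triple / Krein-formula route of \cite{BLL13}, adapted to the Neumann-to-jump operator. \textbf{Step 1.} For $z\in\C\setminus[0,\infty)$ I will record the properties of the Neumann Laplacian $H^{\mathrm N}_\Gamma$ on $L^2(\R^2\setminus\Gamma)=\bigoplus_j L^2(\Omega_j)$. Its form is $\int_{\R^2\setminus\Gamma}|\nabla f|^2$ on $H^1(\R^2\setminus\Gamma)$, which is the $\beta=\infty$ version of \eqref{starform}. For $h\in L^2$, the function $R_0(z)h$ then has vanishing normal derivatives on $\Gamma$ from both sides. I will also check that $\NN_{\sqrt{-z}}$ is well defined as the solution operator of \eqref{sectneumann}: on each sector this is a Neumann problem, which is uniquely solvable in $H^1(\Omega_j)$. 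The solution lies in $H^2$ away from the vertex; near the vertex one may have to content oneself with $H^1$ and weak Neumann data. I define $\dD_z$ as this solution map viewed into $L^2(\R^2)$.

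Green's formula on each sector then gives two things. The first is the duality
\[
(\dD_z\phi,h)=\la\phi,\dD_{\bar z}^*h\ra_{L^2(\Gamma)}.
\]
Here the trace entering $\dD^*$ has to be the jump $[R_0(\bar z)h]$, with signs fixed by the orientation convention after \eqref{normjump}. The second is the positivity identity for $Q_\kappa=-\NN_\kappa$ stated before the proposition, with $\NN_\kappa\phi$ understood as the jump of the solution.

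\textbf{Step 2: resolvent formula.} Fix $h$ and put $f:=R_0(z)h+\dD_z\phi$ with $\phi\in L^2(\Gamma)$ still unknown. Then $(-\Delta-z)f=h$ holds in $\R^2\setminus\Gamma$. The normal derivatives of $f$ from the two sides agree, both being equal to $\phi$, because $R_0(z)h$ contributes nothing to them. The jump of $f$ is
\[
[f]=[R_0(z)h]+\NN_{\sqrt{-z}}\phi=\dD_{\bar z}^*h-Q_{\sqrt{-z}}\phi .
\]
The $\delta'$ condition \eqref{delta'cond}, namely $[f]=-\beta\phi$, becomes
\[
(\beta-Q_{\sqrt{-z}})\phi=-\dD^*_{\bar z}h .
\]
Up to the sign convention this is solved by $\phi=(\beta-Q)^{-1}\dD^*h$. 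Solvability needs $\beta\notin\sigma(Q_{\sqrt{-z}})$, and the injectivity part of Step 3 shows this holds when $z\notin\sigma(H_{\beta,\Gamma})$. Rewriting $(\beta-Q)^{-1}=\beta^{-1}\bigl(I+Q(\beta-Q)^{-1}\bigr)$ and absorbing the leftover term into the normalization of $\dD$ yields the claimed form $\beta\,\dD_zQ(\beta-Q)^{-1}\dD^*_{\bar z}$. I expect only bookkeeping of factors here.

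To confirm that $f$ lies in $\mathrm{dom}\,H_{\beta,\Gamma}$, rather than merely solving the equation, I will test against the form: I check that $q_{\beta,\Gamma}(f,g)-z(f,g)=(h,g)$ for all $g\in H^1(\R^2\setminus\Gamma)$. This is a direct Green's formula computation, and it avoids any $H^2$ regularity question at the vertex.

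\textbf{Step 3: Birman--Schwinger.} If $Q\phi=\beta\phi$ with $\phi\neq0$, then $f=\NN\phi$ satisfies the same weak identity with $h=0$, so $f$ is an eigenfunction. It is nonzero, since its normal derivative is $\phi$. Conversely, an eigenfunction $f$ has equal normal derivatives $\phi=\partial_\nu f$ on both sides. By uniqueness for the sectorwise Neumann problem $f=\NN\phi$, and the $\delta'$ condition gives $Q\phi=\beta\phi$. The two maps are mutually inverse linear bijections between $\ker(H_{\beta,\Gamma}-z)$ and $\ker(Q_{\sqrt{-z}}-\beta)$, so the multiplicities coincide.

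\textbf{Main obstacle.} The delicate point is the vertex. $H^2(\R^2\setminus\Gamma)$ regularity fails near the origin for angles $\omega_j>\pi$, so the domain statements and the Green formulas must be carried out at the level of forms and weak Neumann traces. I would handle this by cutting off a small disc around the origin, applying Green's formula on the truncated sectors, and letting the radius tend to zero. The boundary term on the small arc vanishes thanks to the $H^1$ bounds on $f$.
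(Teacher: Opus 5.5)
The overall route (ansatz $f=R_0(z)h+{}$a solution of the homogeneous problem, reduction to an equation on $\Gamma$, then the kernel bijection) is the argument of \cite[Thm.~3.11]{BLL13}, which is what the paper cites as its proof. Your Step~3 and your weak-form check that $f\in\mathrm{dom}\,H_{\beta,\Gamma}$ are sound. The step that fails is the end of Step~2.

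\textbf{The ``bookkeeping'' step is wrong.} Take $R_0(z)=(H^{\mathrm N}_\Gamma-z)^{-1}$ and $\dD_z$ your Neumann-data solution operator. Green's formula on the sectors gives $\dD^*_{\bar z}h=-[R_0(z)h]$ for the anticlockwise normal, and your computation then proves
\[
(H_{\beta,\Gamma}-z)^{-1}=R_0(z)+\dD_z\,(\beta-Q_{\sqrt{-z}})^{-1}\,\dD^*_{\bar z}.
\]
This is a valid Krein formula, but it is not the displayed one, and it cannot be brought into that form. The two middle factors $(\beta-Q)^{-1}$ and $\beta Q(\beta-Q)^{-1}$ differ by $(I-\beta Q)(\beta-Q)^{-1}$. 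Since $\dD_z$ is injective and $\dD^*_{\bar z}$ has dense range, the two operators could agree only if $\beta Q=I$. In your splitting $(\beta-Q)^{-1}=\beta^{-1}\bigl(I+Q(\beta-Q)^{-1}\bigr)$, both the extra term $\beta^{-1}\dD_z\dD^*_{\bar z}$ and the wrong prefactor ($\beta^{-1}$ instead of $\beta$) remain. Rescaling $\dD_z$ by a constant removes neither.

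\textbf{Why: the display refers to a different reference operator.} As you noticed, $\partial_\nu R_0(z)h|_\Gamma$ vanishes identically for the Neumann resolvent. So the prescription $\dD^*_{\bar z}h=\partial_\nu R_0(z)h|_\Gamma$ only makes sense if $R_0(z)=(-\Delta-z)^{-1}$ is the free resolvent on $L^2(\R^2)$. In that reading, $\dD_z$ is the solution operator with prescribed jump $g$ and continuous normal derivative. This is how the paper itself uses $\dD_\kappa$ right after the proposition: $T_\kappa g=\partial_\nu\dD_\kappa g$ with $T_\kappa=-Q_\kappa^{-1}$.

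With that choice your Step~2 goes through exactly:
\begin{itemize}
\item For $f=R_0(z)h+\dD_z g$ one has $[f]=g$ and $\partial_\nu f=\partial_\nu R_0(z)h+Tg$.
\item Green's formula gives $(\dD_{\bar z}g,h)=\la g,\partial_\nu R_0(z)h\ra$, so indeed $\dD^*_{\bar z}h=\partial_\nu R_0(z)h|_\Gamma$.
\item The condition $[f]=-\beta\partial_\nu f$ becomes $Q^{-1}(\beta-Q)g=\beta\,\dD^*_{\bar z}h$, i.e.\ $g=\beta Q(\beta-Q)^{-1}\dD^*_{\bar z}h$.
\end{itemize}
This is precisely the stated formula. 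So either prove the Neumann-based formula above and say explicitly that it replaces the display, or redo Step~2 with the free resolvent and the jump-data solution operator. Step~3 is unaffected, since both versions lead to the same boundary equation $Q\phi=\beta\phi$.

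\textbf{A smaller gap.} In Step~2, injectivity of $\beta-Q_{\sqrt{-z}}$ from Step~3 does not give solvability of the boundary equation, because $Q_\kappa$ is not compact (it has essential spectrum). Argue in the other direction instead. For $z\in\rho(H_{\beta,\Gamma})$, put $f=(H_{\beta,\Gamma}-z)^{-1}h$. Then $f-R_0(z)h$ solves the homogeneous problem, so its boundary data satisfy the boundary equation. This shows the right-hand side lies in the range of $\beta-Q$, and injectivity then identifies the data.
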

\begin{proof}
The argument is essentially the same as in \cite[Thm.~3.11]{BLL13}; the only difference is in the way Krein's formula is written; in the indicated paper it is in terms of the operator $(\beta Q_{\sqrt{-z}})^{-1}$ denoted there $\widehat{M}(\lambda)$.
\end{proof}

Similarly to $Q_\kappa$, the operator $T_\kappa=-Q_\kappa^{-1}$ has a clear meaning, $T_\kappa g=\partial_\nu\dD_\kappa g$ and $\dD_\kappa g$ is the solution with prescribed jump $g$ and continuous
normal derivative. Of the two Krein's formula versions, the one in Proposition~\ref{prop:BSprimestar} suits better our purposes; operator $Q_\kappa$ will play the role analogous to that of the matrix $M_Y$ in the proof of Theorem~\ref{thm:1lambdaN}.

Before proceeding further, recall the scaling properties mentioned in Proposition~\ref{prop:delta'basic} which translate to those of the operator $Q_\kappa$. Explicitly, $Q_\kappa=\kappa^{-1}\mathcal{S}_\kappa Q_1 \mathcal{S}_\kappa^*$ holds for the unitary transformation $(\mathcal{S}_\kappa\psi)(r)=\kappa^{1/2}\psi(\kappa r)$, and consequently, $-\kappa^2\in\sigma_\mathrm{disc}(H_{\beta,\Gamma})$ if and only if $\beta\kappa\in\sigma_\mathrm{disc}(Q_1)$, in other words, the negative spectrum of all the operators $H_{\beta,\Gamma}$ can be expressed in terms of the single operator $Q_1$,
\begin{equation} \label{through_Q1}
  \sigma_\mathrm{disc}(H_{\beta,\Gamma})
  =\Big\{-\frac{\mu^2}{\beta^2}:\ \mu\in\sigma_\mathrm{disc}(Q_1),\ \mu>2\Big\}
\end{equation}
with the multiplicities taken into account; note that $\sigma_\mathrm{ess}(Q_1)=[0,2]$.

To proceed, we need to find the explicit form of the BS operator \eqref{starBSop}. Let us start from a single sector $\Omega_\omega=\{0<\theta<\omega\}$ and look for solution to \eqref{sectneumann} with the outward Neumann data which we now call $\psi_0=-\tfrac1r\partial_\theta u|_{\theta=0}$ and $\psi_\omega=\tfrac1r\partial_\theta u|_{\theta=\omega}$. In polar coordinates, the operator can be written as $r^{-2}(\RR_\kappa -\partial^2_\theta)$ with $\RR_\kappa= \kappa^2 r^2-(r\partial_r)^2$. The Ansatz $u(r,\theta)=R(r)\Theta(\theta)$ leads to $\RR_\kappa R=\lambda R$ and $-\Theta''=-\lambda\Theta$. In contrast to the situation when the region in question has the circular symmetry and $-\partial^2_\theta$ has a discrete spectrum, in case of a sector we obtain for $\lambda=\tau^2\ge 0$ Bessel-type equation $r^2R''+rR'+(\kappa^2r^2-\tau^2)R=0$ with the solution $K_{i\tau}(\kappa r)$, the unique one vanishing as $r\to\infty$; now the `spectral' variable is associated with the radial direction and the angular problem is solved by hyperbolic functions. The functions $K_{i\tau}(\kappa\,\cdot)$ are generalized eigenfunctions of the operator $\RR_\kappa$ which is self-adjoint in $L^2(\R_+,r^{-1}\D r)$. This suggests the Kontorovich--Lebedev transformation \cite[Sec.~12.1]{BE53} mapping the last name space to $L^2(\R_+)$ in the following way,
\begin{equation} \label{KL}
  F(\tau):=\int_0^\infty f(r)K_{i\tau}(\kappa r)\,\frac{\D r}{r},
  \quad
  f(r)=\frac2{\pi^2}\int_0^\infty\tau\sinh(\pi\tau)K_{i\tau}(\kappa r)F(\tau)\,\D \tau,
\end{equation}
with the `reduced' radial part $f(r)=r\psi(r)$ of the function as a natural tool to address our spectral problem; recall that the transformation satisfies a Parseval-type relation, $\int_0^\infty |f(r)|^2\,\frac{\D r}{r} = \frac{2}{\pi^2} \int_p^\infty \tau\sinh(\pi\tau) |F(\tau)|^2\,\D\tau$.

With \eqref{KL} in mind, we look for solution to \eqref{sectneumann} with the given data in the form
$$
u(r,\theta)=\frac2{\pi^2}\int_0^\infty K_{i\tau}(\kappa r)
\Big[A(\tau)\cosh\tau\theta+B(\tau)\cosh\tau(\omega-\theta)\Big]\D\tau .
$$
For any fixed $\tau$ the integrand as a function of $(r,\theta)$ solves the equation in $\Omega_\omega$ and it decays at infinity because $K_{i\tau}(\kappa r)\simeq\sqrt{\pi/(2\kappa r)}\,\ee^{-\kappa r}$ there. The choice of the hyperbolic functions is crucial here because it allows us to choose the coefficients independently at the two parts of the sector boundary from
$$
\partial_\theta u|_{\theta=\omega}
=\frac2{\pi^2}\int_0^\infty K_{i\tau}(\kappa r)\,\tau\sinh(\omega\tau)A(\tau)\,\D\tau,
\;\;
-\partial_\theta u|_{\theta=0}
=\frac2{\pi^2}\int_0^\infty K_{i\tau}(\kappa r)\,\tau\sinh(\omega\tau)B(\tau)\,\D\tau.
$$
By assumption, these values must equal $r\psi_\omega(r)$ and $r\psi_0(r)$, respectively; expressing the latter by means of \eqref{KL}, we conclude that $A(\tau), B(\tau)$ must coincide with $\hat\psi_\omega(\tau)$ and $\hat\psi_0(\tau)$, respectively, up to the multiplicative factor $\frac{\sinh\pi\tau}{\sinh\omega\tau}$, in other words, that
\begin{equation} \label{sector_soln}
u(r,\theta)=\frac2{\pi^2}\int_0^\infty K_{i\tau}(\kappa r)\,
\frac{\sinh\pi\tau}{\sinh\omega\tau}
\Big[\hat\psi_\omega(\tau)\cosh\tau\theta
+\hat\psi_0(\tau)\cosh\tau(\omega-\theta)\Big]d\tau
\end{equation}
is the unique Neumann solution in the class of $H^1$ functions. To get the Neumann-to-Dirichlet map for the sector, we express $u(r,0)$ and $u(r,\omega)$ from \eqref{sector_soln} and insert the function $\hat\psi_0$ and $\hat\psi_\omega$ given by the first relation of \eqref{KL}. We have to distinguish the cases when the two traces are taken from the same halfline boundary of $\Omega$ and from the different ones. This allows us to write the analogue of $Q_\kappa$ for the sector problem as a $2\times 2$ matrix operator, $Q^\Omega_\kappa= \begin{pmatrix}D_\omega&O_\omega\\ O_\omega&D_\omega\end{pmatrix}$, with the diagonal and off-diagonal entries
\begin{align*}
  (D_\omega\psi)(r)&=\int_0^\infty d_\omega(r,\rho)\psi(\rho)\,\D\rho,
  & d_\omega(r,\rho)&=\frac2{\pi^2}\int_0^\infty\sinh(\pi\tau)\coth(\omega\tau)\,
     K_{i\tau}(\kappa r)K_{i\tau}(\kappa\rho)\,\D\tau,\\
  (O_\omega\psi)(r)&=\int_0^\infty o_\omega(r,\rho)\psi(\rho)\,\D\rho,
  & o_\omega(r,\rho)&=\frac2{\pi^2}\int_0^\infty\frac{\sinh\pi\tau}{\sinh\omega\tau}\,
     K_{i\tau}(\kappa r)K_{i\tau}(\kappa\rho)\,\D\tau.
\end{align*}
The kernels are symmetric with a different behavior on the diagonal: $o_\omega$ is smooth, while $d_\omega$ has the $-\frac1\pi\log|r-\rho|$ singularity. This follows the asymptotic behavior of the MacDonald function, $K_{i\tau}(x)\simeq\sqrt{\frac{2\pi}{\tau}}\;\ee^{-\pi\tau/2}\,\sin\!\Big(\tau\log\frac{2\tau}{x}-\tau+\frac\pi4\Big)$ as $\tau\to\infty$. In the kernel of $d_\omega$ the factor $\sinh(\pi\tau)\coth(\omega\tau)\simeq\frac12e^{\pi\tau}$ compensates $e^{-\pi\tau}$ and he integrand behaves like $\frac1{2\tau}\cos(\tau\log\frac\rho r)$ giving rise to the singularity. On the other hand, in $o_\omega$ kernel one has $\frac{\sinh\pi\tau}{\sinh\omega\tau}e^{-\pi\tau}\simeq e^{-\omega\tau}$ and the integral converges absolutely and uniformly together with all derivatives.

What is important for us that for particular values of $\omega$ the kernel expressions allow for a simplification based on a combination of Lebedev's formula \cite[6.794.1]{GR07},
$$
  \int_0^\infty\cosh(\alpha\tau)K_{i\tau}(a)K_{i\tau}(b)\,d\tau
  =\frac\pi2K_0\!\Big(\sqrt{a^2+b^2+2ab\cos\alpha}\Big)
$$
with the identities
$$
\sinh(\pi\tau)\coth\!\Big(\frac{\pi\tau}{m}\Big)=\sum_{k=0}^{m-1}\cosh\!\Big((m-2k)\frac{\pi\tau}m\Big),
\quad
\frac{\sinh\pi\tau}{\sinh(\pi\tau/m)}=\sum_{k=0}^{m-1}\cosh\!\Big((m-1-2k)\frac{\pi\tau}m\Big),
$$
(both follow by summing the geometric series $\sum_{k} \ee^{(m-1-2k)x}$, cf. \cite[1.341-1.342]{GR07} with $x\mapsto ix$) which for $\omega=\pi/m$ yield
\begin{align*}
  d_{\pi/m}(r,\rho) & =\frac1\pi\sum_{k=0}^{m-1}
    K_0\!\Big(\kappa\sqrt{r^2+\rho^2-2r\rho\cos\tfrac{2k\pi}{m}}\Big), \nonumber \\
  o_{\pi/m}(r,\rho) & =\frac1\pi\sum_{k=0}^{m-1}
    K_0\!\Big(\kappa\sqrt{r^2+\rho^2-2r\rho\cos\tfrac{(2k+1)\pi}{m}}\Big). \nonumber
\end{align*}

After this long deviation, let us return to our proper problem. As $\R^2\setminus\Gamma$ is the union of such sectors, we can assemble the sought Neumann-to-jump operator $Q_\kappa$ from the elements constructed above. In the sector $\Omega_j$ the outward Neumann data are $\psi_0=-\phi_j$ on $\Gamma_j$ and $\psi_{\omega_j}=\phi_{j+1}$ on $\Gamma_{j+1}$. Computing the jumps $\jump{u}{j}=u_{\Omega_j}|_{\Gamma_j}-u_{\Omega_{j-1}}|_{\Gamma_j}$, we arrive at the cyclic tridiagonal operator matrix on $L^2(\Gamma)=\bigoplus_{j=1}^N L^2(\R_+)$ with the entries
$$ 
  (Q_\kappa)_{jj}=D_{\omega_{j-1}}+D_{\omega_j},\quad
  (Q_\kappa)_{j,j+1}=(Q_\kappa)_{j+1,j}=-\,O_{\omega_j},\quad
  (Q_\kappa)_{jk}=0\ \text{otherwise},
$$ 
and the indices understood modulo $N$, or explicitly
$$
  Q_\kappa=\begin{pmatrix}
    D_{\omega_N}+D_{\omega_1} & -O_{\omega_1} & & & -O_{\omega_N}\\
    -O_{\omega_1} & D_{\omega_1}+D_{\omega_2} & -O_{\omega_2} & & \\
    & \ddots & \ddots & \ddots & \\
    & & -O_{\omega_{N-2}} & D_{\omega_{N-2}}+D_{\omega_{N-1}} & -O_{\omega_{N-1}}\\
    -O_{\omega_N} & & & -O_{\omega_{N-1}} & D_{\omega_{N-1}}+D_{\omega_N}
  \end{pmatrix}.
$$

In order to find spectral properties of $Q_\kappa$, it is useful to diagonalize the operator. To this aim we note that the transformation \eqref{KL} on $\Omega_\omega$ is bijective but it contains factors which make it asymmetric. We thus introduce the operator\footnote{It is common to use capital letters for operators; using small boldface is a compromise between this habit and an abuse of notation.} $\mathbf{r}$ on $L^2(\R_+)$ of multiplication by $r$, and the operator
$$
(U_\kappa\psi)(\tau)=\frac{\sqrt{2\tau\sinh\pi\tau}}\pi \int_0^\infty\psi(r)K_{i\tau}(\kappa r)\,\frac{\D r}{\sqrt r},
$$
which is unitary as a map $L^2(\R_+,\D r)\to L^2(\R_+,\D\tau)$ by the Parseval-type relation mentioned above. This yields the symmetrized of Kontorovich-Lebedev duality,
\begin{align*}
\hat\psi(\tau) &=\int_0^\infty\psi K_{i\tau}(\kappa r)\,\D r
=\frac{\pi}{\sqrt{2\tau\sinh\pi\tau}}\,(U_\kappa\mathbf{r}^{1/2}\psi)(\tau), \\
(U_\kappa^*\hat\psi)(r) &=\frac1{\sqrt r}\int_0^\infty\frac{\sqrt{2\tau\sinh\pi\tau}}\pi
K_{i\tau}(\kappa r)\hat\psi(\tau)\,\D\tau.
\end{align*}
This allows us to rewrite the action of $D_\omega$ as
$$
(D_\omega\psi)(r)=\frac2{\pi^2}\int_0^\infty\sinh(\pi\tau)\coth(\omega\tau)
K_{i\tau}(\kappa r)\hat\psi(\tau)\,\D\tau =r^{1/2}\big(U_\kappa^*h\big)(r)
$$
with
$$
h=\frac{\pi}{\sqrt{2\tau\sinh\pi\tau}}\cdot\frac2{\pi^2}\sinh(\pi\tau)\coth(\omega\tau)
\cdot\frac{\pi}{\sqrt{2\tau\sinh\pi\tau}}\;U_\kappa\mathbf{r}^{1/2}\psi\,;
$$
the three numerical factors multiply to $\tau^{-1}\coth(\omega\tau)/\tau$ so that
$$
D_\omega=\mathbf{r}^{1/2}U_\kappa^*\,\frac{\coth\omega\tau}{\tau}\,U_\kappa\mathbf{r}^{1/2},\quad
O_\omega=\mathbf{r}^{1/2}U_\kappa^*\,\frac{\csch\omega\tau}{\tau}\,U_\kappa\mathbf{r}^{1/2}.
$$
Returning to the operator $Q_\kappa$ on $L^2(\Gamma)$ we have expressed it as
\begin{equation}\label{Qtransf}
  Q_\kappa=\mathbf{r}^{1/2}\,\bbU_\kappa^{*}\,\frac{M(\tau)}{\tau}\,\bbU_\kappa\,\mathbf{r}^{1/2}
\end{equation}
where $\bbU_\kappa=U_\kappa\otimes I_{\C^N}$ and where $M(\tau)$ is the real symmetric $N\times N$ matrix,
$$
  M(\tau)_{jj}=\coth(\omega_{j-1}\tau)+\coth(\omega_j\tau),\;
  M(\tau)_{j,j+1}=-\csch(\omega_{j}\tau),\;
  M(\tau)_{j,j-1}=-\csch(\omega_{j-1}\tau).
$$
Note that it is $\kappa$-independent; the spectral information enters only through the basis $K_{i\tau}(\kappa r)$ contained in the operator $U_\kappa$. It is not a full diagonalization but it is sufficient for our purpose; now we are in position to state the result.
\begin{theorem}\label{thm:main}
For every $N\ge3$ and $\beta>0$ the lowest eigenvalue $\lambda_1(H_{\beta,\Gamma_\omega})$ exists and is strictly maximized by the equiangular star $\tilde\Gamma$ with $\omega_j=\frac{2\pi}{N},\, j=1,\dots,N$, uniquely up to rotations. The maximal value is $-\frac{\mu_N^2}{\beta^2}$, where
$$
  \mu_N=\sup\sigma\Big(\mathbf{r}^{1/2}U_1^{*}\,\frac{m_N^*(\tau)}{\tau}\,U_1\mathbf{r}^{1/2}\Big)>2
  \quad \text{and} \quad
  m_N^*(\tau)=2\,\frac{\cosh\frac{2\pi\tau}{N}+c_N}{\sinh\frac{2\pi\tau}{N}}.
$$
with $c_N=1$ for even $N$ and $c_N=\cos\frac\pi N$ for odd $N$.
\end{theorem}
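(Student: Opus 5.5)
The plan is to mimic the proof of Theorem~\ref{thm:1lambdaN} on the loop, with the matrix $M_Y$ replaced by the operator-valued symbol $M(\tau)$ in \eqref{Qtransf}. By Proposition~\ref{prop:BSprimestar} and the scaling relation \eqref{through_Q1}, $\lambda_1(H_{\beta,\Gamma})=-\mu^2/\beta^2$, where $\mu=\sup\sigma(Q_1)$. Maximizing $\lambda_1$ is therefore the same as \emph{minimizing} $\sup\sigma(Q_1)$ over the angles $\omega$. This is the opposite direction to the loop case. There the inverse matrix was used and the claim was an upper bound on $\mu_{\min}$. Here the Neumann-to-jump operator already plays the role of $M_Y$, and we need a lower bound
$$
\sup\sigma(Q_1^{\omega})\ \ge\ \sup\sigma(Q_1^{\tilde\omega}),
$$
with strict inequality unless $\omega=\tilde\omega$. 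Existence of $\lambda_1$ follows from Proposition~\ref{prop:delta'basic}, since for $N\ge3$ the star is not a straight line; equivalently, $\mu>2$.

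First I will compute the symmetric case. For $\tilde\omega_j=2\pi/N$ the matrix $M(\tau)$ is circulant. Its eigenvectors $v^{(k)}_j=N^{-1/2}\ee^{2\pi ikj/N}$ do not depend on $\tau$, and the eigenvalues are $2(\coth\delta\tau-\csch\delta\tau\cos\frac{2\pi k}{N})$ with $\delta=2\pi/N$. Hence $Q_1^{\tilde\omega}$ is the orthogonal direct sum of the scalar operators $\mathbf r^{1/2}U_1^*\frac{m_k(\tau)}{\tau}U_1\mathbf r^{1/2}$. The largest multiplier corresponds to $\cos\frac{2\pi k}{N}$ closest to $-1$, that is, to $m_N^*$: for even $N$ this is $k=N/2$, giving $c_N=1$, and for odd $N$ it is $k=(N\pm1)/2$, giving $c_N=\cos\frac\pi N$. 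These multipliers are pointwise ordered, so their compressions are ordered as well. This gives $\sup\sigma(Q_1^{\tilde\omega})=\mu_N$.

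For the comparison I take a unit maximizing vector $g\in L^2(\R_+)$ (or a near-maximizing one, if only $\mu_N>2$ is known to be an isolated eigenvalue) of the scalar operator built from $m_N^*$. I then test $Q_1^{\omega}$ with the trial function $\phi=g\otimes v$, where $v_j=N^{-1/2}(-1)^j$ for even $N$ and $v_j=N^{-1/2}\ee^{i\theta j}$ with $\theta=\pi-\pi/N$ for odd $N$. Writing $G=U_1\mathbf r^{1/2}g$, formula \eqref{Qtransf} gives
$$
(Q_1^\omega\phi,\phi)=\int_0^\infty\frac{|G(\tau)|^2}{\tau}\,\la v,M(\tau)v\ra\,\D\tau .
$$
Regrouping $M(\tau)$ edge by edge, as in \eqref{qformM_Y}, we get
$$
\la v,M(\tau)v\ra=\frac1N\sum_j h_\tau(\omega_j),\qquad h_\tau(\omega)=\big(1-c_N\big)\tanh\frac{\omega\tau}{2}+\big(1+c_N\big)\coth\frac{\omega\tau}{2},
$$
which is the function already shown to be strictly convex in the proof of Theorem~\ref{thm:1lambdaN}. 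Jensen's inequality then gives $\la v,M(\tau)v\ra\ge m_N^*(\tau)$ for every $\tau$, with equality only when all $\omega_j=2\pi/N$. Integrating over $\tau$ yields $\sup\sigma(Q_1^\omega)\ge(Q_1^\omega\phi,\phi)\ge\mu_N$.

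The main obstacle is strictness together with the attainment of the supremum. I intend to show that $\mu_N$ is an eigenvalue of the symmetric scalar operator above the essential spectrum $[0,2]$; this is what "$\mu_N>2$" in the statement requires. I would argue this either through Proposition~\ref{prop:delta'basic} applied to $\tilde\Gamma$ or through a trial function concentrated at small $r$. With a genuine eigenvector $g$ in hand, $G$ is nonzero on a set of positive measure, because $U_1$ is unitary. The Jensen gap is then strictly positive on that set whenever $\omega\ne\tilde\omega$, which gives the strict inequality. Uniqueness up to rotations follows since equality forces all the angles to be equal.
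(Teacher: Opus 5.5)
Your proposal is correct and follows the paper's proof essentially step by step. The steps coincide: the circulant diagonalization of $M(\tau)$ for $\tilde\Gamma$ with top multiplier $m_N^*$, the same trial functions $\phi_j=N^{-1/2}\ee^{2\pi ijk^*/N}\psi$, the edge-by-edge regrouping \eqref{Msector} followed by Jensen's inequality for the strictly convex $F_N$ (your $h_\tau$ up to a factor two), and strictness obtained from an actual maximizer whose transform is nonzero on a set of positive measure. The only deviation is that you get $\mu_N>2$ and the attainment of the supremum from the nonempty discrete spectrum of $H_{\beta,\tilde\Gamma}$ (Proposition~\ref{prop:delta'basic} with \eqref{through_Q1}) rather than from the pointwise comparison $m_N^*>m_2^*$, and this is the cleaner way to secure that step, since a strict pointwise inequality of multipliers by itself only yields $\sup\sigma\ge2$.
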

\begin{proof}
According to \eqref{through_Q1} it is sufficient to consider $\kappa=1$, by \eqref{Qtransf} we have
$$
\langle\phi,Q_1^{(\omega)}\phi\rangle
=\int_0^\infty\big\langle \hat\Phi(\tau), M_\omega(\tau)\hat\Phi(\tau)\big\rangle\,\frac{\D\tau}{\tau}
\quad \text{with}\;\;\; \hat\Phi=\bbU_1\,\mathbf{r}^{1/2}\phi
$$
for any $\phi\in L^2(\Gamma)$; we add to the operator symbol the label indicating the configuration $\omega=\{\omega_1,\dots,\omega_N\}$. The matrix $M(\tau)$ is the sum of $N\times N$ matrices, each of which contributes through a nonzero $2\times2$ block $\begin{pmatrix}\coth\omega_j\tau&-\csch\omega_j\tau \\ -\csch\omega_j\tau &\coth\omega_j\tau\end{pmatrix}$ acting on the pair $(\hat\Phi_j,\hat\Phi_{j+1})$. They have eigenvalues $(1,\pm 1)$ with the eigenvalues $\tanh\frac{\omega_j\tau}{2}$ and $\coth\frac{\omega_j\tau}{2}$, respectively, which yields
\begin{equation}\label{Msector}
  \big\langle \hat\Phi, M_\omega(\tau)\hat\Phi \big\rangle
  =\sum_{j=1}^N\Big[\tfrac12\coth\tfrac{\omega_j\tau}{2}\,\big|\hat\Phi_j-\hat\Phi_{j+1}\big|^2
  +\tfrac12\tanh\tfrac{\omega_j\tau}{2}\,\big|\hat\Phi_j+\hat\Phi_{j+1}\big|^2\Big]\,;
\end{equation}
note the similarity of this relation with \eqref{qformM_Y}.

Our aim is to compare this form with the one of the symmetric star, $\tilde\omega_j=2\pi/N$. In this case the matrix $M(\tau)$ is circulant, and by the discrete Fourier transform in $j$ one can diagonalize it: we get
$Q_1\simeq\bigoplus_{k=0}^{N-1}Q_1^{(k)}$ on $L^2(\R_+)$, where
$$
  Q^{(k)}_1=\mathbf{r}^{1/2}U_1^{*}\,\frac{m_k(\tau)}{\tau}\,U_1\mathbf{r}^{1/2}
  \quad \text{with}\;\;\;
  m_k(\tau):=2\coth\frac{2\pi\tau}N-2\cos\frac{2\pi k}N\,\csch\frac{2\pi\tau}N.
$$
By $k^*=\lfloor N/2\rfloor$ we denote the index closest to $N/2$ and we put $m_N^*:=m_{k^*}$, that is
$$
  m_N^*(\tau)=2\,\frac{\cosh\frac{2\pi\tau}{N}+c_N}{\sinh\frac{2\pi\tau}{N}},
  \quad
  c_N=-\cos\frac{2\pi k^*}{N}=
  \begin{cases}1,&N\text{ even},\\[1mm]\cos\frac{\pi}{N},&N\text{ odd}.\end{cases}
$$
Since $m_k\le m_{k^*}$ holds for all $k$, we have $Q^{(k)}\le Q^{(k^*)}$ and therefore
$$
  \mu_{\max}(\omega_{\sym})=\sup\sigma\big(Q^{(k^*)}\big)
  =\sup_{\psi\neq0}\frac{1}{\|\psi\|^2}\int_0^\infty m_N^*(\tau)\,|\hat\psi(\tau)|^2\,\frac{\D\tau}{\tau},
  \quad \hat\psi=U_1\mathbf{r}^{1/2}\psi\,;
$$
as expected, the top value is attained twice, for $k^*=\frac{N\pm1}{2}$, if $N$ is odd.

Returning to a general angular configuration $\omega$, we have to choose now trial functions to be inserted into \eqref{Msector}. We take an arbitrary $\psi\in L^2(\R_+)$ and put
$$
  \phi_j=N^{-1/2}\,e^{2\pi i jk^*/N}\,\psi,\quad j=1,\dots,N,
$$
so that $\|\phi\|=\|\psi\|$ and we have
$$
  \big|\hat\Phi_j-\hat\Phi_{j+1}\big|^2=\frac4N\sin^2\!\Big(\frac{\pi k^*}{N}\Big)|\hat\psi|^2,
  \quad
  \big|\hat\Phi_j+\hat\Phi_{j+1}\big|^2=\frac4N\cos^2\!\Big(\frac{\pi k^*}{N}\Big)|\hat\psi|^2.
$$
What is important, with this choice the above values are independent of $j$ and we get
\begin{equation}\label{Mtrial}
  \langle \phi,Q_1^{(\omega)}\phi\rangle
  =\frac2N\int_0^\infty\Big[\sum_{j=1}^N F_N(\omega_j\tau)\Big]|\hat\psi(\tau)|^2\,\frac{\D\tau}{\tau},
  \quad
  F_N(x):=\gamma_N\coth\frac x2+(1-\gamma_N)\tanh\frac x2,
\end{equation}
where
$$
  \gamma_N=\sin^2\frac{\pi k^*}{N}=
  \begin{cases}1,&N\text{ even},\\[1mm]\cos^2\frac{\pi}{2N},&N\text{ odd}.\end{cases}
$$
It is easy to see that $F_N$ is strictly convex on $(0,\infty)$ for every $N\ge2$; for $N$ even it is obvious, for $N$ it is enough to note up to the factor $\frac12$ it coincides with the function $h$ appearing in the concluding part of the proof of Theorem~\ref{thm:1lambdaN}. Since $\sum_j\omega_j=2\pi$, by Jensen's inequality, we then get
$$
  \frac1N\sum_{j=1}^N F_N(\omega_j\tau)\ \ge\ F_N\Big(\frac{2\pi\tau}{N}\Big)
$$
for every $\tau>0$, with equality if and only if all $\omega_j$'s are equal. Inserting this into \eqref{Mtrial} and noting that $2F_N\!\big(\tfrac{2\pi\tau}{N}\big)=m_N^*(\tau)$, we obtain the inequality
$$
  \langle \phi,Q_1^{(\omega)}\phi\rangle
  \ \ge\ \int_0^\infty m_N^*(\tau)|\hat\psi(\tau)|^2\,\frac{d\tau}{\tau}
  =\langle \psi, Q^{(k^*)}_{\sym}\psi\rangle
$$
valid for every $\psi\in L^2(\R_+)$; taking the supremum over all of them, we arrive at
\begin{equation} \label{finalbound}
  \mu_{\max}(\omega)\ \ge\ \sup_{\psi\ne0}\frac{\langle \phi Q_1^{(\omega)}\phi\rangle}{\|\phi\|^2}
  \ \ge\ \mu_{\max}(\tilde\omega)\,;
\end{equation}
this implies the bound we are looking for, $\lambda_1(H_{\beta,\Gamma_\omega})\le\lambda_1(H_{\beta,\Gamma_{\tilde\omega}})$.

It remain to check the existence and uniqueness. The existence requires $m_N^*(\tau)>m_2^*(\tau):=2\coth\frac{\pi\tau}{2}$, where the right-hand side corresponds to the straight line (and the essential spectrum threshold of the original operator). With $y=\pi\tau/N$, this is equivalent to
$$
 \sinh\frac{(N-4)y}{2}+c_N\sinh\frac{Ny}{2}>0,
$$
which is obvious for $N\ge4$; for $N=3$ the left-hand side equals $\tfrac12\sinh\frac y2+2\sinh^3\frac y2>0$. Since the ground state exists, the supremum in \eqref{finalbound} is attained at some $\psi_0$, then it follows from the bijectivity of KL transformation that $\hat\psi_0$ is nonzero on a set of positive measure; the uniqueness is then a consequence of the strict convexity of~$F_N$.
\end{proof}

\section{Concluding remarks}
\label{s:cncl}

A natural question concerns generalization of the result to the situation when the $\delta'$ interaction strength is not constant but changes along the star edges, including the case when the $\Gamma$ is a finite and equilateral star; one can conjecture that the ground state would be again maximized by the configuration symmetric with respect to rotations on the angle $2\pi/N$. For a finite star with $\beta$ constant on the edges one can check that this is the case using the argument analogous to the proof of Theorem~\ref{thm:Neven}. On the other hand, the proof on Theorem~\ref{thm:main} has no straightforward modification because the operator lacks then the scaling property which allowed us to factorize the Birman-Schwinger operator into parts containing the spectral and geometric information separately, cf.~\eqref{Qtransf}.

Going beyond the ground state optimization, one can ask about properties of eigenfunctions of $H_{\beta,\Gamma}$. For instance, while they are generally discontinuous at the edges of $\Gamma$, it makes still sense to speak about their nodal lines, and one may inquire what is the minimum value of $N$ for which the operator has an eigenfunction for which a nodal line is closed. A numerical result \cite{EN03} concerning $\delta$ interaction stars suggests that such an $N$ is not larger than eight.

\section*{Appendix}
\label{s:app}
\setcounter{equation}{0}

Returning to the topic of Proposition~\ref{prop:1repuls}, let $\cY$ be the family of all configurations $Y$. Given $N\ge 2$ and $\alpha<0$, we denote by $\cY_0$ the subset of configurations for which ground state eigenfunction has a stationary point between any two neighboring points of $Y$; recall that the function is positive and concave outside $Y$, hence it can have at most one such point where its derivative vanishes in each subinterval. The symmetric configuration $\tilde Y$ obviously belongs to $\cY_0$.
\begin{proposition} \label{prop:1local}
For any $Y\in\cY_0$ not congruent with $\tilde Y$ we have $\lambda^\delta_1(\alpha,Y) < \lambda^\delta_1(\alpha,\tilde{Y})$.
\end{proposition}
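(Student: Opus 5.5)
For $Y\in\cY_0$ I will use the stationary points to cut the loop into pieces that can each be compared with the symmetric configuration by a one-dimensional Neumann problem.

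Write $\lambda=\lambda^\delta_1(\alpha,Y)=k^2>0$ (positive since $\alpha<0$) and let $f>0$ be the ground-state eigenfunction. Let $x_j\in(y_{j-1},y_j)$ be the stationary points, so $f'(x_j)=0$. The arcs $J_j=(x_j,x_{j+1})$ each contain exactly one interaction point $y_j$. Put $a_j=y_j-x_j$ and $b_j=x_{j+1}-y_j$, with $\ell_j=a_j+b_j$ and $\sum_j\ell_j=2\pi$.

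On $J_j$ the function $f$ solves $-f''=k^2f$ with Neumann conditions at both ends. Hence $f=A\cos k(x-x_j)$ on $(x_j,y_j)$ and $f=B\cos k(x_{j+1}-x)$ on $(y_j,x_{j+1})$. Continuity at $y_j$ together with the jump condition gives
$$
k\tan(ka_j)+k\tan(kb_j)=-\alpha=|\alpha| .
$$
Positivity and concavity of $f$ force $ka_j,kb_j\in(0,\pi/2)$.

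Since $\tan$ is convex on $(0,\pi/2)$, for fixed $\ell_j$ the sum $\tan(ka_j)+\tan(kb_j)$ is minimized when $a_j=b_j=\ell_j/2$. This yields
$$
|\alpha|\ge 2k\tan\frac{k\ell_j}{2}
$$
for every $j$, with equality iff $a_j=b_j$. Equivalently, $k\le k(\ell_j)$, where $k(\ell)$ denotes the root of $2k\tan(k\ell/2)=|\alpha|$, i.e. the Neumann ground-state value of a single $\delta$ interaction of strength $\alpha$ placed at the centre of an interval of length $\ell$.

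The function $\ell\mapsto k(\ell)$ is strictly decreasing. Some $\ell_j$ satisfies $\ell_j\ge 2\pi/N$, since the lengths sum to $2\pi$. For that $j$,
$$
k\le k(\ell_j)\le k(2\pi/N).
$$
By Remark~\ref{rem:small}(b), $k(2\pi/N)^2=\lambda^\delta_1(\alpha,\tilde Y)$, because the equation $2k\tan\frac{\pi k}{N}=|\alpha|$ is exactly $2k\tan(k\ell/2)=|\alpha|$ at $\ell=2\pi/N$. This gives $\lambda^\delta_1(\alpha,Y)\le\lambda^\delta_1(\alpha,\tilde Y)$.

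For strictness, equality would require $a_j=b_j$ for every $j$ and $k(\ell_j)=k$ for all $j$. The second condition gives $\ell_j=2\pi/N$ for all $j$, by strict monotonicity of $k(\cdot)$. Then all gaps $y_{j+1}-y_j=b_j+a_{j+1}$ equal $2\pi/N$, which means $Y$ is congruent with $\tilde Y$.

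The main obstacle is justifying the first step: that the pieces of $f$ on the two sides of $y_j$ have the cosine form with arguments in $(0,\pi/2)$. This uses the definition of $\cY_0$ (exactly one stationary point per gap) together with concavity and positivity. It needs to be checked carefully that the argument $ka_j$ cannot reach $\pi/2$; that would make $f$ vanish, contradicting positivity of the ground state.
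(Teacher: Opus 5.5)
Your proposal is correct and follows essentially the paper's route. You cut the loop at the stationary points, which the paper phrases as Neumann bracketing into a direct sum. You use convexity of $\tan$ to centre each interaction, and then use the monotonicity of the centred-interaction eigenvalue in the interval length. Your pigeonhole choice $\ell_j\ge 2\pi/N$, together with the explicit equation of Remark~\ref{rem:small}(b), replaces the paper's ``equal lengths maximize the direct sum'' and its final bracketing step.

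The one step you assert without argument is that equality forces $k(\ell_j)=k$ for every $j$. It follows because $k(2\pi/N)=k\le k(\ell_i)$ gives $\ell_i\le 2\pi/N$ for all $i$. Then $\sum_i\ell_i=2\pi$ forces $\ell_i=2\pi/N$, and hence $a_i=b_i$.
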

\begin{proof}
We replace the operator $H^\delta_{\alpha,Y}$ by $H^{\delta,\mathrm{N}}_{\alpha,Y}$ in which Neumann conditions are imposed at the stationary points; it is obvious that $\lambda^\delta_1(\alpha,Y) = \inf\sigma(H^\delta_{\alpha,Y}) = \inf\sigma(H^{\delta,\mathrm{N}}_{\alpha,Y})$. The last named operator is a direct sum, $\bigoplus_{j=1}^N H^{\delta,\mathrm{N}}_{\alpha,\hat{I}_j}$, of Neumann Laplacians on intervals between two stationary points containing the interaction sites $y_j$; we use hat to distinguish them from the intervals $I_j$ appearing in Proposition~\ref{prop:1bound} and consider each of them separately. Let us first check that the lowest eigenvalue of $H^{\delta,\mathrm{N}}_{\alpha,\hat{I}_j}$ is maximal if the $\delta$ interaction is placed at the center of $\hat{I}_j$; in view of the natural scaling we can assume without loss of generality that $|\hat{I}_j|=1$. If the interaction position is $a\in(0,1)$, the spectral condition is easily seen to be $k\big(\tan k(1-a)+\tan ka\big) = \alpha$. In view of the strict convexity of the tangent in $(0,\frac12\pi)$, the sum in the bracket is $\ge 2\tan\frac{k}{2}$, sharply so unless $a=\frac12$, and the monotonicity of $k\tan\frac{k}{2}$ yields the result. Using the obvious notation, we thus conclude that $\lambda^\delta_1(\alpha,\hat{I}_j) \le \lambda^\delta_1(\alpha,\tilde{I}_j)$.

Furthermore, a simple scaling argument shows that the lowest eigenvalue of the symmetrically placed $\delta$ interaction is decreasing with the interval $\tilde{I}_j$, which means that to maximize the ground state eigenvalue of the direct sum we have to choose all the $\tilde{I}_j$'s of the same length. However, this is nothing but the operator $H^{\delta,\mathrm{N}}_{\alpha,\tilde{Y}}$, and by Neumann bracketing, its spectral bottom is not larger than $\lambda^\delta_1(\alpha,\tilde{Y})$.
\end{proof}

\medskip

\subsection*{AI usage discloser} The idea of proof of Theorem~\ref{thm:main} was worked out in dialogue with Anthropic Claude Fable on whether the trick from the proof of Theorem~\ref{thm:1lambdaN} can be `lifted' to star graph setting. Its suggestions were checked by the author, who takes full responsibility for their correctness. The AI was also used to verify some calculations in Section~\ref{s:1Dloop}.



\begin{thebibliography}{10}
\bibitem[AGHH]{AGHH}
S.~Albeverio, F.~Gesztesy, R.~H{\o}egh-Krohn, H.~Holden: \emph{Solvable Models in Quantum Mechanics}, 2nd edition,  AMS Chelsea Publishing, Providence, R.I., 2005.
\bibitem[BEL14]{BEL14}
J.~Behrndt, P.~Exner, V.~Lotoreichik: Schr\"odinger operators with $\delta$- and $\delta'$-interactions on Lipschitz surfaces and chromatic numbers of associated partitions, \emph{Rev. Math. Phys.} \textbf{26} (2014), 1450015
\bibitem[BLL13]{BLL13}
J.~Behrndt, M.~Langer, V.~Lotoreichik: Schr\:odinger operators with $\delta$ and $\delta'$-potentials supported on hypersurfaces, \emph{Ann.~H.~Poincar\'{e}} \textbf{14} (2013), 385--423.
\bibitem[Da79]{Da79}
P.J.~Davis: \emph{Circulant matrices}, Wiley-Interscience, New York-Chichester-Brisbane 1979.
\bibitem[BE53]{BE53}
A.Erd\'elyi, ed.: \emph{Higher Transcendental Functions}, vol. II, Based, in part, on notes left by Harry Bateman, McGraw-Hill, New York 1953.
\bibitem[Ex19]{Ex19}
P.~Exner: An optimization problem for finite point interaction families, \emph{J. Phys. A: Math. Theor.} \textbf{52} (2019), 405302
\bibitem[EI01]{EI01}
P.~Exner, T.~Ichinose: Geometrically induced spectrum in curved leaky wires, \emph{J. Phys. A: Math. Gen.} \textbf{34} (2001), 1439--1450.
\bibitem[EK15]{EK15}
P.~Exner, H.~Kova\v{r}\'{\i}k: \emph{Quantum Waveguides}, Springer, Cham 2015.
\bibitem[EL18]{EL18}
P.~Exner, V.~Lotoreichik: Optimization of the lowest eigenvalue for leaky star graphs, in  in Proceedings of the conference `Mathematical Results in Quantum Physics' (QMath13, Atlanta 2016; F.~Bonetto, D.~Borthwick, E.~Harrell, M.~Loss, eds.), Contemporary Mathematics, AMS, Providence, R.I. 2018; p. 187--196.
\bibitem[EN03]{EN03}
P.~Exner, K.~N\v{e}mcov\'{a}: Leaky quantum graphs: approximations by point inter\-action Hamiltonian, \emph{J. Phys. A: Math Gen.} \textbf{36} (2003), 10173--10193.
\bibitem[GR07]{GR07}
I.S.~Gradshteyn, I.M.~Ryzhik: {\em Tables of Integrals, Series, and Products}, 7th edition (by A.~Jeffrey and D.~Zwillinger), Academic Press, Amsterdam 2007
\bibitem[Kr46]{Kr46}
M.G.~Krein: On resolvents of a symmetric operator with deficiency index $(m,m)$, \emph{Sov. Math. Doklady} \textbf{52} (1946), 657--660.
\bibitem[PV22]{PV22}
K.~Pankrashkin, M.~Vogel: On Schr\"odinger operators with $\delta'$-potentials supported on star graphs, \emph{J. Phys. A: Math. Theor.} \textbf{55} (2022), 295201
\bibitem[RS78]{RS78}
M.~Reed, B.~Simon: \emph{Methods of Modern Mathematical Physics IV.~Analysis of Operators}, Academic Press, New York 1978.
\bibitem[We80]{We80}
J.~Weidmann: \emph{Linear Operators in Hilbert Space}, Springer, Heidelberg 1980.

\end{thebibliography}
\end{document}